\documentclass[12pt]{article}
\bibliographystyle{abbrv}
\usepackage{lineno,hyperref}

\usepackage{makecell}
\usepackage{amsmath}
\usepackage{graphicx}
\usepackage{subfigure}
\usepackage{amsfonts}
\usepackage{verbatim}
\usepackage{latexsym}
\usepackage{epstopdf}
\usepackage{float}
\usepackage{color}
\usepackage{enumitem}
\usepackage{amssymb}
\usepackage{amsthm}
\textheight 9.3in
\textwidth 6.3in
\oddsidemargin 0pt
\evensidemargin 0pt
\topmargin -0.6 in
\parskip 3pt
\parindent 0.3in

\usepackage{a4}
\allowdisplaybreaks[4]

\makeatletter
\@addtoreset{equation}{section}

\begin{document}

\newcommand{\EE}{\mathbb{E}}
\newcommand{\PP}{\mathbb{P}}
\newcommand{\RR}{\mathbb{R}}
\newcommand{\SM}{\mathbb{S}}
\newcommand{\ZZ}{\mathbb{Z}}
\newcommand{\ind}{\mathbf{1}}
\newcommand{\LL}{\mathbb{L}}
\def\F{{\cal F}}
\def\G{{\cal G}}
\def\P{{\cal P}}

\newtheorem{theorem}{Theorem}[section]
\newtheorem{lemma}[theorem]{Lemma}
\newtheorem{coro}[theorem]{Corollary}
\newtheorem{defn}[theorem]{Definition}
\newtheorem{assp}[theorem]{Assumption}
\newtheorem{cond}[theorem]{Condition}
\newtheorem{expl}[theorem]{Example}
\newtheorem{prop}[theorem]{Proposition}
\newtheorem{rmk}[theorem]{Remark}
\newtheorem{conj}[theorem]{Conjecture}

\newcommand\tq{{\scriptstyle{3\over 4 }\scriptstyle}}
\newcommand\qua{{\scriptstyle{1\over 4 }\scriptstyle}}
\newcommand\hf{{\textstyle{1\over 2 }\displaystyle}}
\newcommand\hhf{{\scriptstyle{1\over 2 }\scriptstyle}}
\newcommand\hei{\tfrac{1}{8}}

\newcommand{\eproof}{\indent\vrule height6pt width4pt depth1pt\hfil\par\medbreak}

\def\proof{\noindent{\it Proof. }}
\def\Proof{\noindent{\it Proof of Theorem \ref{thm:invar}. }}

\def\refer{\hangindent=0.3in\hangafter=1}

\newcommand\wD{\widehat{\D}}
\newcommand{\ka}{\kappa_{10}}

\title{ Explicit positivity preserving numerical method for linear stochastic volatility models driven by $\alpha$-stable process}

\author{Xiaotong Li\footnotemark[2] , Wei Liu\footnotemark[2] , Xuerong Mao\footnotemark[4] , Hongjiong Tian\footnotemark[2] , Yue Wu\footnotemark[4]}

\maketitle

\renewcommand{\thefootnote}{\fnsymbol{footnote}}
\footnotetext[2]{Department of Mathematics, Shanghai Normal University, Shanghai, 200234, China.}
\footnotetext[4]{Department of Mathematics and Statistics, University of Strathclyde, Glasgow, G1 1XH, UK.}

\begin{abstract}
In this paper, we introduce a linear stochastic volatility model driven by $\alpha$-stable processes, which admits a unique positive solution. To preserve positivity, we modify the classical forward Euler-Maruyama scheme and analyze its numerical properties. The scheme achieves a strong convergence order of $1/\alpha$.  Numerical simulations are presented at the end to verify theoretical results.
\end{abstract}

\medskip \noindent
{\small\bf Key words}: stochastic volatility models, $\alpha$-stable process, positivity preserving Euler-Maruyama scheme, strong convergence, numerical simulation 


\section{Introduction}
Stochastic volatility models, which represent a new generation of option pricing models, gained significant attention following their introduction in 1987 \cite{HullWhite1987,Scott1987,Wiggins1987}. These models highlighted the inherent risks in securities markets and the highly dynamic nature of such risks. For a more detailed exploration of these models, readers are encouraged to consult the comprehensive works in \cite{Bergomi2015,Lewis2000}.

A typical example of a stochastic volatility model is the GARCH diffusion model introduced in the monograph \cite{Lewis2000} (GARCH is a loose term that can accommodate many types of discrete-time financial models, and may have various continuous-time constraints).  As being the continuous time limit of many GARCH type processes,  the GARCH diffusion, driven by a Brownian motion $B$, represents an actual volatility process of the form, 
\begin{equation}\label{model}
dV_{t}=(a-bV_{t}) dt+c V_{t} dB_{t},
\end{equation}
where the volatility drift parameters, $a$ and $b$, are assumed to be constants, capturing the mean-reverting nature of the volatility process, and $c$ represents the volatility uncertainty. Since $b$ has the dimensions of inverse time, $1/b$ represents a ``half-life" for volatility shocks.

Brownian motion is characterized by normally distributed increments, which decay exponentially in their tails. This makes it inadequate for modeling real-world phenomena that exhibit large jumps with tail distributions following a power-law pattern \cite{CaoOuyangLiLiChen2018,Taleb2020arXiv}, where jump-diffusion L\'evy processes provide a more suitable characterization. For example, the Ait-Sahalia-type rate model with Poisson jumps are explored in \cite{Coffie2024,LeiGanLiu2023,ZhaoWangWang2021}.

The $\alpha$-stable process is a special process of the L\'evy processes. Its particularity is that there is no $q$-th moment when $q\geq\alpha$. These distinct mathematical properties make them particularly effective for simulating and modeling real-world phenomena with infinite variance, or frequent, unpredictable large jumps \cite{KRSSB2009,NikiasShao1995,Pierce1997}. In finance, Fu and Li \cite{FuLi2010}, Li and Ma \cite{LiMa2015} studied the asymptotic properties of the Cox-Ingersoll-Ross (CIR) model driven by an $\alpha$-stable process. Jiao, Ma and Scotti \cite{JiaoMaScotti2017} revealed that the $\alpha$-CIR model can describe several recent observations of the sovereign bond market, such as the persistency of low interest rates together with the presence of large jumps. 

As stochastic volatility is often significantly impacted by sudden events, it is intuitive to use $\alpha$-stable noise for modelling purpose. In this paper, we propose to replace Brownian motion $B_t$ in the stochastic volatility model \eqref{model}  with an $\alpha$-stable process $L^{\alpha}(t)$. We demonstrate such a stochastic differential equation (SDE) is well-posed in the sense that it admits a unique positive solution with probability one (see Theorem \ref{thm:possol}).

We are also interested in developing appropriate numerical treatments for the stochastic volatility model driven by $\alpha$-stable process as the solution is typically not available in closed form. In Section \ref{sec4}, we propose a modified Euler-Maruyama (EM) method to simulate the solution while preserving the positivity. 
In literature, EM has been shown to well approximate time-homogeneous SDEs driven by symmetric $\alpha$-stable noise, even if the drift coefficients are not irregular. For instance, the performance of EM for additive SDEs in the case of $\gamma$-H\"older continuous ($0<\gamma\leq 1$) and bounded drift coefficients has been exhaustively studied: from the initial attempt by Pamen and Taguchi \cite{PamenTaguchi2017}, which for the first time established a strong convergence order $(\gamma/2)\wedge(1/p)$ for EM in the $L_p~(p\geq 1)$ sense, to the recent work by Butkovsky, Dareiotis and Gerencs{\'e}r \cite{BDG2024}, which lifted the order of strong $L_p$-convergence to $1/2+\gamma/\alpha-\epsilon$ for an arbitrary $\epsilon\in (0,1/2)$ within a relaxed range $\alpha\in[2/3,2]$ and in the entire range of $\gamma $, i.e., $ \gamma>1- \alpha/2$. When confronted with multiplicative $\alpha$-stable process, Mikulevi\v cius and Xu \cite[Propositions 1--2]{MikuleviciusXu2018} derived strong convergence orders in the entire range of $\gamma$.

In financial applications of SDEs where solution positivity is essential, carefully adapted EM algorithms have been developed to ensure positivity is maintained.
Li and Taguchi \cite{LiTaguchi2019} proposed a positivity-preserving implicit EM scheme for jump-extended CIR processes, where jumps are controlled by a compensated spectrally positive $\alpha$-stable process for $\alpha\in(1,2)$. Inspired by the design of \cite{LiTaguchi2019}, Li and Liu \cite{LiLiu2023arXiv} constructed a partially-implicit positivity-preserving EM for a jump-extended constant elasticity of variance process. We therefore stress that, to the best of our knowledge, this paper is the first one that considers modified explicit EM for SDEs driven by $\alpha$-stable processes that also preserves positivity. The strong order of convergence is confirmed in Theorem \ref{thm:conver2} to be $1/\alpha$ in $L_q$-norm, for $q\in [1,\alpha)$.

The structure of the rest of the paper is as follows. Section \ref{sec2} presents the mathematics preliminaries. Section \ref{sec3} discusses the existence and uniqueness of the positive global solution to our model. Section \ref{sec4} gives the positivity preserving EM scheme, proves its convergence, and provides the convergence rate. Numerical simulations supporting the theoretical results are presented in Section \ref{sec5}. Section \ref{sec6} presents the conclusion and outlines directions for future work.

\section{Mathematical preliminaries}\label{sec2}
Let $(\Omega,\{\mathcal{F}_t\}_{t\geq 0},\PP)$ be a complete probability space with a filtration $\{\mathcal{F}_t\}_{t\geq 0}$ satisfying the usual conditions (i.e., it is right continuous and increasing in $t$ while $\mathcal{F}_0$ contains all $\PP$-null sets). Let $\EE$ denote the probability expectation with respect to $\PP$. Moreover, define $a\vee b=\max(a,b)$ and $a\wedge b=\min(a,b)$ for any $a,b\in \RR$.

A random variable $ X $ is said to follow a stable distribution, denoted by $ X\sim S_\alpha(\sigma, \beta, \mu) $, if it has characteristic function of the following form,
\begin{align*}
\varphi_X(u) &= \mathbb{E} \left[ \exp\{iuX\} \right] \\
&= \left\{
\begin{array}{ll}
	\exp \left\{ -\sigma^\alpha |u|^\alpha \left( 1 - i\beta \textup{sgn}(u) \tan \frac{\alpha\pi}{2} \right) \right\} + i\mu u, & \text{if } \alpha \neq 1, \\
	\exp \left\{ -\sigma |u| \left( 1 + i\beta \frac{2}{\pi} \textup{sgn}(u) \log |u| \right) \right\} + i\mu u, & \text{if } \alpha = 1,
\end{array}
\right.
\end{align*}
where $ \alpha \in (0, 2]$ is the index of stability, $\sigma \in (0, \infty)$ is the scale parameter, $\beta \in [-1, 1]$ is the skewness parameter and $ \mu \in (-\infty, \infty)$ is the location parameter. When $ \mu = 0 $, we say $ X $ is strictly $ \alpha $-stable ($ \alpha \neq 1 $). We refer to \cite{SamorodnitskyTaqqu1994} for more details on stable distributions.

$L^{\alpha}(t)$ is a scalar $\alpha$-stable process with $\alpha\in(1,2)$. There are equivalent definitions of the $\alpha$-stable processes, such as by using the L\'evy–Khinchine formula or by using the L\'evy–It\^o decomposition \cite{SamorodnitskyTaqqu1994}. For a more comprehensive introduction to L\'evy processes and $\alpha$-stable processes, please refer to the monographs \cite{D.Applebaum2009,JanickiWeron2021,Sato2013}.  Here, we use the following one as it is convenient for the simulation. A stochastic process $L^{\alpha}(t)$ is called the strict $\alpha$-stable process if
\begin{itemize}
\item $L^{\alpha}(0)=0$, a.s.;
\item For any $m\in N$ and $0\leq t_1<t_2< \ldots t_m\leq T$, the random variables $L^{\alpha}(t_0), L^{\alpha}(t_1)-L^{\alpha}(t_0), L^{\alpha}(t_2)-L^{\alpha}(t_1), \ldots, L^{\alpha}(t_m)-L^{\alpha}(t_{m-1})$ are independent;
\item For any $0\leq s<t<\infty$, $L^{\alpha}(t)-L^{\alpha}(s)$ follows $S_{\alpha}((t-s)^{1/\alpha}, \beta, 0)$, where $S_{\alpha}(\sigma, \beta, \mu)$ is a four-parameter stable distribution.
\end{itemize}

In this paper, we consider scalar stochastic volatility models of the following form
\begin{equation}\label{mainSDE}
d x(t) = \left( \mu - \lambda x(t) \right) dt + \kappa x(t-) d L^{\alpha}(t),~~~x(0)=x_0.
\end{equation}
where $x(t-)$ denotes the left-hand limit of $x$ at time $t$, and $L^{\alpha}(t)$ is an $\alpha$-stable process for $\alpha \in (1,2)$ with the L\'evy measure defined as
\begin{equation}\label{nu}
\nu(dz) = \frac{C_{\alpha}}{|z|^{\alpha+1}}dz, \quad \text{for}~z\neq0,
\end{equation}
where $C_{\alpha}=\frac{\alpha 2^{\alpha-1}\Gamma(\frac{\alpha+1}{2})}{\pi^{\frac{1}{2}}\Gamma(1-\frac{\alpha}{2})}$, $\Gamma(\cdot)$ is the gamma function. By the L\'evy-It\^o decomposition, an $\alpha$-stable process admits the following integral representation, 
\begin{equation*}
L^{\alpha}(t)=\int_{0}^{t}\int_{0<|z|\leq 1}z \widetilde{N}(dz,ds)+\int_{0}^{t}\int_{|z|>1}z N(dz,ds),
\end{equation*}
where $N$ is the Poisson measure on $[0,\infty)\times(\RR \backslash \{0\})$  with  $\EE N(dz,dt)=\nu (dz)dt$ and $\widetilde{N}(dz,dt)=N(dz,dt)-\nu(dz)dt$ is the compensated Poisson random measure, with its L\'evy measure denoted by $\nu$.

Throughout the whole paper, we suppose that the following assumptions hold.
\begin{assp}\label{assp:coef}
The parameters in \eqref{mainSDE} satisfy $\mu>1$, $\lambda>0$, $ 0<\kappa<1 $ and
\begin{equation*}
 \lambda > \frac{2\kappa^{0.5}C_{\alpha}}{2\alpha-1}. 
\end{equation*}
\end{assp}

In practical applications, it is reasonable to impose a limit on the jump size. For instance, the maximum downward jump should not exceed the current price of the stock, and so on. Based on this, we give the following assumption.

\begin{assp}\label{assp:levy}
The L\'evy measure $\nu(dz)$ is allowed to have negative jumps, but we assume there is a lower bound for negative jumps, that is, the jump height $z$ satisfies
\begin{equation*}
z>-\frac{1}{\kappa}.
\end{equation*}
\end{assp}

The following three important lemmas are crucial for proving our main results.
\begin{lemma}\label{Ito}
For a function $V(x)\in C^2(\RR)$, the It\^o formula \textup{\cite{D.Applebaum2009,OksendalSulem2000}} of $V(x)$ associated with SDE \eqref{mainSDE} is defined by
\begin{align*}
V(x(t))&=V(x(0))+\int_{0}^{t}V'(x(s))f(x(s))ds\\
&\quad +\int_{0}^{t}\int_{\RR\backslash\{0\}}\Big(V(x(s-)+g(x(s-))z)-V(x(s-))\Big)\widetilde{N}(ds,dz)\\
&\quad +\int_{0}^{t}\int_{\RR\backslash\{0\}}\Big(V(x(s-)+g(x(s-))z) -V(x(s-))\\
&\qquad \qquad \qquad\quad  -V'(x(s-))g(x(s-))z\mathbb{I}_{\{|z|\leq 1\}}(z)\Big)\nu(dz)ds,
\end{align*}
where $f(x)=\mu-\lambda x$ and $g(x)=\kappa x$.
\end{lemma}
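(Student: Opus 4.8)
The plan is to obtain this via the classical two‑step route for It\^o's formula for jump SDEs (the statement itself being the specialization to \eqref{mainSDE} of the general formula in \cite{D.Applebaum2009,OksendalSulem2000}): first establish it when the L\'evy measure is finite, where it reduces to elementary calculus between jumps plus a telescoping sum over the locally finitely many jump times, and then remove the truncation of the small jumps by a limiting argument. To begin, rewrite \eqref{mainSDE} in the equivalent Poisson‑random‑measure form
\begin{equation*}
x(t)=x_0+\int_0^t f(x(s))\,ds+\int_0^t\!\!\int_{0<|z|\le1}g(x(s-))z\,\widetilde N(dz,ds)+\int_0^t\!\!\int_{|z|>1}g(x(s-))z\,N(dz,ds),
\end{equation*}
with $f(x)=\mu-\lambda x$ and $g(x)=\kappa x$, using the L\'evy--It\^o decomposition of $L^\alpha$ recalled above; there is no Brownian component, so the only ``continuous'' contribution is the finite‑variation drift. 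One also localizes by the stopping times $\tau_n=\inf\{t\ge0:|x(t)|\ge n\}$, proving the identity on $[0,t\wedge\tau_n]$ and letting $n\to\infty$; this suffices since It\^o's formula need only be applied to the solution on its maximal interval of existence. On $[0,t\wedge\tau_n]$ the path is confined to a bounded set, so $V,V',V''$ are bounded there, and in particular $\int_{0<|z|\le1}|z|^2\nu(dz)=C_\alpha\int_0^1 z^{1-\alpha}\,dz<\infty$ because $\alpha<2$, which makes the compensator integral absolutely convergent (the $|z|>1$ part being finite as well since $\nu(\{|z|>1\})<\infty$).

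For the finite‑measure step, fix $\epsilon\in(0,1)$, put $\nu_\epsilon=\nu\,\ind_{\{|z|>\epsilon\}}$ (a finite measure), and let $x_\epsilon$ solve the SDE with $L^\alpha$ replaced by its $\epsilon$‑truncation, the jumps of size in $(\epsilon,1]$ still being compensated. The jump times of $x_\epsilon$ are locally finite; between consecutive jump times $x_\epsilon$ is $C^1$, so the fundamental theorem of calculus produces $\int_0^{t\wedge\tau_n}V'(x_\epsilon(s))f(x_\epsilon(s))\,ds$, while at each jump time $s$ the increment of $V(x_\epsilon)$ equals exactly $V(x_\epsilon(s-)+g(x_\epsilon(s-))\Delta L_\epsilon(s))-V(x_\epsilon(s-))$. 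Summing these contributions, writing the jump sum as an integral against $N$ restricted to $|z|>\epsilon$, and adding and subtracting the compensator of the $(\epsilon,1]$‑part, yields It\^o's formula for $x_\epsilon$ with $\nu$ replaced by $\nu_\epsilon$.

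It remains to pass to the limit $\epsilon\downarrow0$. A Burkholder--Davis--Gundy and Gronwall estimate, using $\int_{0<|z|\le\epsilon}|z|^2\nu(dz)\to0$ and the linearity of $g$, gives $\EE\sup_{s\le t\wedge\tau_n}|x_\epsilon(s)-x(s)|^2\to0$; then each term converges — the drift term by dominated convergence, the compensated stochastic integral in $L^2$ via the It\^o isometry together with the Lipschitz‑in‑$x$ bound on $V(\cdot+g(\cdot)z)-V(\cdot)$ and the factor $|z|$ on $|z|\le1$, and the compensator term by dominated convergence using the second‑order Taylor bound $|V(x+\kappa xz)-V(x)-V'(x)\kappa xz|\le\tfrac12\|V''\|_\infty\kappa^2x^2z^2$ on $|z|\le1$ (valid on $[0,t\wedge\tau_n]$) against the $\nu$‑integrable weight $z^2\,\ind_{\{|z|\le1\}}$, plus boundedness against $\nu$ on $|z|>1$. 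The main obstacle is exactly this last passage to the limit: the small‑jump part is infinite‑activity, and not of finite variation when $\alpha\ge1$, so one must verify that the compensated integrals converge and, above all, that the nonlinear integrand $V(x(s-)+\kappa x(s-)z)-V(x(s-))-V'(x(s-))\kappa x(s-)z\,\ind_{\{|z|\le1\}}$ is dominated \emph{uniformly in} $\epsilon$ by a $\nu$‑integrable function on the localized event; the quadratic Taylor estimate near $z=0$ combined with $\alpha<2$ is precisely what furnishes this domination, and the rest is routine.
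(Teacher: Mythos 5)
The paper does not actually prove this lemma: it is stated as a quotation of the standard It\^o formula for L\'evy-driven SDEs, with the proof deferred to the cited monographs \cite{D.Applebaum2009,OksendalSulem2000}. So your attempt is not competing with an argument in the paper; it is a reconstruction of the textbook proof, and in outline (L\'evy--It\^o form of the equation, localization, truncation of the small jumps at level $\epsilon$, elementary calculus plus a telescoping sum over the finitely many jump times, then $\epsilon\downarrow 0$ with a quadratic Taylor bound giving uniform $\nu$-integrable domination of the compensator integrand) it is exactly the classical route of Applebaum's Theorem 4.4.7.

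There is, however, one step that fails as written: the claim that a BDG--Gronwall argument yields $\EE\sup_{s\le t\wedge\tau_n}|x_\epsilon(s)-x(s)|^2\to 0$, and likewise the use of the It\^o isometry for the stochastic-integral terms. Localization by $\tau_n$ bounds the state, not the jump sizes of the driving noise, and for $\alpha\in(1,2)$ one has $\int_{|z|>1}z^2\,\nu(dz)=\infty$; once $x_\epsilon$ and $x$ have separated before a large jump, the jump increment $\kappa\bigl(x_\epsilon(s-)-x(s-)\bigr)z$ has infinite second moment, so the quantity you propose to send to zero is generically infinite (consistently, the solution itself only has moments of order $q<\alpha$, cf.\ Lemma \ref{lemma:1}). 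The standard fix, which is also how the cited references proceed, is to keep the $L^2$/isometry machinery only for the compensated small-jump part, where $\int_{0<|z|\le1}z^2\,\nu(dz)<\infty$, and to adjoin the $|z|>1$ jumps by interlacing: on $[0,T]$ there are almost surely finitely many of them, one proves the formula between consecutive large-jump times, and at each such time the identity is preserved trivially by adding the increment $V(x(s-)+\kappa x(s-)\Delta L^\alpha(s))-V(x(s-))$, which is precisely the $N$-integral over $|z|>1$. Alternatively one can run your Gronwall argument in $L^q$ with $q\in[1,\alpha)$ using an estimate of the type of Lemma \ref{L_alpha_moment_2} instead of the isometry. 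With that replacement the rest of your limiting argument (dominated convergence for the drift and for the compensator term via the bound $|V(x+\kappa xz)-V(x)-V'(x)\kappa xz|\le\tfrac12\|V''\|_\infty\kappa^2x^2z^2$ on the localized event) goes through.
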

\begin{lemma}\label{L_alpha_moment}\textup{\cite{SamorodnitskyTaqqu1994}}
Let $L^{\alpha}(t)\sim S_{\alpha}(t^{1/\alpha},\beta,0)$ with $\alpha\in(1,2)$, for any $q\in[1,\alpha)$, there is a constant $\bar{C}$ such that
\begin{equation*}
	\EE|L^{\alpha}(t)|^q\leq \bar{C}t^{\frac{q}{\alpha}}.
\end{equation*}
\end{lemma}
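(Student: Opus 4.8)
The plan is to exploit the scale homogeneity of strictly $\alpha$-stable laws to reduce the estimate to the single random variable $L^{\alpha}(1)$, and then to invoke (or re-derive) the finiteness of its lower-order absolute moments. First I would record the elementary scaling identity that can be read directly off the characteristic function in Section~\ref{sec2}: if $Y\sim S_{\alpha}(1,\beta,0)$ with $\alpha\neq 1$, then for any $\sigma>0$ the random variable $\sigma Y$ has characteristic function $u\mapsto\varphi_{Y}(\sigma u)=\exp\{-\sigma^{\alpha}|u|^{\alpha}(1-i\beta\,\textup{sgn}(u)\tan\tfrac{\alpha\pi}{2})\}$, since $\textup{sgn}(\sigma u)=\textup{sgn}(u)$; hence $\sigma Y\sim S_{\alpha}(\sigma,\beta,0)$. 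Applying this with $\sigma=t^{1/\alpha}$ and using $L^{\alpha}(t)\sim S_{\alpha}(t^{1/\alpha},\beta,0)$ gives the distributional equality $L^{\alpha}(t)\stackrel{d}{=}t^{1/\alpha}\,L^{\alpha}(1)$, so that
\begin{equation*}
\EE|L^{\alpha}(t)|^{q}=t^{q/\alpha}\,\EE|L^{\alpha}(1)|^{q}.
\end{equation*}
It therefore suffices to show $\bar C:=\EE|L^{\alpha}(1)|^{q}<\infty$ for every $q\in[1,\alpha)$.

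For this finiteness I would use the classical polynomial tail estimate for non-degenerate $\alpha$-stable laws (see \cite{SamorodnitskyTaqqu1994}): there is a constant $K>0$ with $\PP(|L^{\alpha}(1)|>x)\leq K x^{-\alpha}$ for all $x\geq 1$. Expressing the moment through the tail and splitting at $x=1$,
\begin{equation*}
\EE|L^{\alpha}(1)|^{q}=\int_{0}^{\infty}q x^{q-1}\,\PP(|L^{\alpha}(1)|>x)\,dx
\leq \int_{0}^{1}q x^{q-1}\,dx+K\int_{1}^{\infty}q x^{q-1-\alpha}\,dx,
\end{equation*}
where the first integral equals $1$ (as $q\geq 1>0$) and the second converges precisely because $q-\alpha<0$, with value $Kq/(\alpha-q)$. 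This produces a finite constant $\bar C$ depending only on $\alpha,\beta,q$, which completes the argument.

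The only genuine point is the tail bound $\PP(|L^{\alpha}(1)|>x)=O(x^{-\alpha})$; everything else is bookkeeping via the scaling identity and the layer-cake formula. Since that tail asymptotic is exactly the property quoted from \cite{SamorodnitskyTaqqu1994}, in the paper one may simply cite it; if a self-contained derivation were desired, I would obtain it from the L\'evy measure \eqref{nu} by comparing $L^{\alpha}(1)$ with the contribution of its ``large jumps'' (a compound Poisson process whose jump law has a Pareto-type tail governed by $\nu(\{|z|>x\})\asymp x^{-\alpha}$), the small-jump martingale part having all moments.
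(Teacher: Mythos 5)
The paper offers no proof of this lemma at all---it is quoted directly from \cite{SamorodnitskyTaqqu1994}---so there is no in-paper argument to compare against step by step. Your proposal is a correct, self-contained derivation of exactly the cited fact, and it is essentially the standard argument from that reference: the scaling identity $L^{\alpha}(t)\stackrel{d}{=}t^{1/\alpha}L^{\alpha}(1)$ is legitimate here because $\alpha\in(1,2)$ (so $\alpha\neq 1$, no logarithmic correction) and $\mu=0$ (no drift term), and it reduces the claim to $\EE\left|L^{\alpha}(1)\right|^{q}<\infty$; the layer-cake computation with the polynomial tail bound $\PP\left(|L^{\alpha}(1)|>x\right)\leq Kx^{-\alpha}$ for $x\geq 1$ (which follows from the tail asymptotic for non-Gaussian stable laws, $\alpha<2$, together with the trivial bound $\PP\leq 1$ on any bounded range) uses the hypothesis $q<\alpha$ precisely where it is needed, yielding $\bar C=\EE\left|L^{\alpha}(1)\right|^{q}\leq 1+Kq/(\alpha-q)$, a constant depending only on $\alpha$, $\beta$ and $q$, as the statement requires. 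Your closing remark is also accurate: the only nontrivial input is the tail estimate, which one may either cite or rederive from the L\'evy measure \eqref{nu} by splitting off the large-jump compound Poisson part.
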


\begin{lemma}\label{L_alpha_moment_2}\textup{\cite{Zhang2013}}
Let $\{\xi(t)\}_{t\geq 0}$ be a left continuous $(\mathcal{F}_t)$-adapted $\RR^d$-valued process and that satisfies 
\begin{equation*}
\int_{0}^{T}\EE|\xi(s)|^{\alpha}ds<+\infty, ~~\forall~T>0.
\end{equation*}
Then there exists a constant $C_{p,\alpha}>0$, for any $p\in(0,\alpha)$, 
\begin{equation*}
\EE\left[\sup_{0\leq t \leq T}\left|\int_{0}^{t}\xi(s)dL^{\alpha}(s)\right|^p\right]\leq C_{p,\alpha}\left(\int_{0}^{T}\EE|\xi(s)|^{\alpha}ds\right)^{\frac{p}{\alpha}}.
\end{equation*}
\end{lemma}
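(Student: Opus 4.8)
The plan is to isolate the ``endpoint'' estimate
\[
\EE\left|\int_{0}^{T}\xi(s)\,dL^{\alpha}(s)\right|^{q}\le C_{q,\alpha}\left(\int_{0}^{T}\EE|\xi(s)|^{\alpha}\,ds\right)^{q/\alpha},\qquad q\in[1,\alpha),
\]
and then to recover the full statement (the running supremum, and every $p\in(0,\alpha)$) from it; note that the case $\xi\equiv1$ of this endpoint estimate is exactly Lemma~\ref{L_alpha_moment}. To prove it in general I would first establish it for a \emph{bounded simple} predictable integrand $\xi(s)=\sum_{i=0}^{n-1}\eta_{i}\ind_{(t_{i},t_{i+1}]}(s)$ with each $\eta_{i}$ bounded and $\mathcal F_{t_{i}}$-measurable. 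Granting that, the stochastic integral becomes a bounded operator from the simple processes (with the $L^{\alpha}(\Omega\times[0,T])$-norm) into $L^{q}(\Omega)$, so the estimate extends by continuity to the closure of the simple processes, which contains every left-continuous $\mathcal F_{t}$-adapted $\xi$ with $\int_{0}^{T}\EE|\xi(s)|^{\alpha}\,ds<\infty$ (this is where the left-continuity hypothesis enters), and Fatou's lemma handles the passage to the limit.

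For a bounded simple $\xi$ one has $I(T):=\int_{0}^{T}\xi\,dL^{\alpha}=\sum_{i}\eta_{i}\big(L^{\alpha}(t_{i+1})-L^{\alpha}(t_{i})\big)$, and the sequence $\big(\eta_{i}(L^{\alpha}(t_{i+1})-L^{\alpha}(t_{i}))\big)_{i}$ is tangent, with a conditionally independent decoupled version obtained by replacing $L^{\alpha}$ by an independent copy $\widetilde L^{\alpha}$. A decoupling inequality of de la Pe\~na--Montgomery-Smith type then gives $\EE|I(T)|^{q}\le C_{q}\,\EE\big|\sum_{i}\eta_{i}(\widetilde L^{\alpha}(t_{i+1})-\widetilde L^{\alpha}(t_{i}))\big|^{q}$. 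Because the L\'evy measure \eqref{nu} is symmetric, conditionally on $(\eta_{i})_{i}$ the variable $\sum_{i}\eta_{i}(\widetilde L^{\alpha}(t_{i+1})-\widetilde L^{\alpha}(t_{i}))$ is symmetric $\alpha$-stable with scale $\big(\sum_{i}|\eta_{i}|^{\alpha}(t_{i+1}-t_{i})\big)^{1/\alpha}$ (a sum of independent symmetric $\alpha$-stables being symmetric $\alpha$-stable with the $\ell^{\alpha}$-aggregated scale), so
\[
\EE\!\left[\Big|\sum_{i}\eta_{i}\big(\widetilde L^{\alpha}(t_{i+1})-\widetilde L^{\alpha}(t_{i})\big)\Big|^{q}\,\Big|\,(\eta_{i})_{i}\right]=\EE\big|S_{\alpha}(1,0,0)\big|^{q}\cdot\Big(\sum_{i}|\eta_{i}|^{\alpha}(t_{i+1}-t_{i})\Big)^{q/\alpha},
\]
the first factor being finite since $q<\alpha$. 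Taking expectations and then applying Jensen's inequality to the concave map $x\mapsto x^{q/\alpha}$ (it is $q/\alpha<1$ that keeps the constant free of $T$) yields the endpoint estimate with $C_{q,\alpha}=C_{q}\,\EE|S_{\alpha}(1,0,0)|^{q}$.

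To pass from the endpoint estimate to the lemma: for $q\in(1,\alpha)$ the estimate shows $I(\cdot)$ is an $L^{q}$-bounded local martingale, hence a genuine martingale, so $|I|^{q}$ is a nonnegative submartingale; Doob's weak-type maximal inequality gives $\PP(\sup_{t\le T}|I(t)|>\lambda)\le\lambda^{-q}\EE|I(T)|^{q}$, and the layer-cake formula converts this into $\EE\sup_{t\le T}|I(t)|^{p}\le C_{p,q}\big(\EE|I(T)|^{q}\big)^{p/q}$ for every $p\in(0,q)$. Given $p\in(0,\alpha)$, choose $q\in(p,\alpha)\cap[1,\alpha)$ (nonempty since $\alpha>1$) and insert the endpoint estimate; this proves the lemma.

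I expect the decoupling step to be the main obstacle: although standard, it --- together with the closed form of the distribution of a stochastic integral of a deterministic function against a symmetric $\alpha$-stable process (see \cite{SamorodnitskyTaqqu1994}) --- lies outside the three lemmas quoted above. It is worth stressing why the more elementary route through the L\'evy--It\^o decomposition and a Bichteler--Jacod/Kunita inequality is awkward here: that approach produces a quadratic-variation term $\EE\big(\int_{0}^{T}\!\int_{\RR}|\xi(s)z|^{2}N(dz,ds)\big)^{q/2}$, whose expectation would require an $L^{2}$-bound on $\xi$ that the hypothesis does not supply, while replacing it by the sub-additive bound introduces $\int_{|z|\le r}|z|^{q}\nu(dz)$, which diverges as $r\to0$ precisely because $q<\alpha$; balancing the small- and large-jump contributions then forces a splitting level tuned to $\EE|\xi(s)|^{\alpha}$ and, unless done very carefully, contaminates the constant with powers of $T$. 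The approximation/localization reduction to bounded simple integrands, though routine, is likewise not quite free.
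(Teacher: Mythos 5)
The first thing to say is that the paper contains no proof of this statement at all: Lemma \ref{L_alpha_moment_2} is imported verbatim from \cite{Zhang2013}, so there is no in-paper argument to compare yours against, and what follows assesses your proof on its own terms and against the standard proofs in the literature. Your architecture is sound. For bounded simple integrands, the de la Pe\~{n}a--Montgomery-Smith tail comparison for tangent sequences (whose constants are universal, independent of the number of summands) legitimately replaces $L^{\alpha}$ by an independent copy; conditionally on the $\eta_i$ the decoupled sum is $\alpha$-stable with scale $\big(\sum_i|\eta_i|^{\alpha}\Delta t_i\big)^{1/\alpha}$, its $q$-th moment is finite since $q<\alpha$, and Jensen with $q/\alpha<1$ removes the randomness without introducing $T$; density of simple processes plus Fatou, and then Doob plus the layer-cake formula, give the supremum and all $p\in(0,\alpha)$. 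Two remarks on the comparison: your symmetry appeal to \eqref{nu} is consistent with the paper, and the argument even survives $\beta\neq 0$ because independent strictly $\alpha$-stable variables combine linearly into a strictly $\alpha$-stable variable with the same $\ell^{\alpha}$-aggregated scale and skewness in $[-1,1]$, with $\sup_{\beta}\EE|S_\alpha(1,\beta,0)|^q<\infty$. On the other hand, the route usually taken for this inequality (and the one behind the cited source) is lighter: one proves a weak-type $(\alpha)$ maximal bound $\PP\big(\sup_{t\leq T}|\int_0^t\xi\,dL^{\alpha}|>\lambda\big)\leq C\lambda^{-\alpha}\int_0^T\EE|\xi(s)|^{\alpha}ds$, either via the ``inner clock'' time-change representation of stable integrals (Rosinski--Woyczynski type) or via the L\'evy--It\^o splitting with the truncation level optimized --- precisely the balancing act you describe as awkward, but it does close --- and then the stated estimate for every $p\in(0,\alpha)$, supremum included, follows from the layer-cake formula in a few lines, with no decoupling and no Doob.

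The one step I would not let stand as written is ``$L^q$-bounded local martingale, hence a genuine martingale'': that implication is false in general (the reciprocal of a three-dimensional Bessel process is a strict local martingale bounded in $L^q$ on compact time intervals for $q<3$). The gap is harmless here and repairable inside your own framework: either apply your endpoint estimate to the stopped integrands $\xi\ind_{[0,\tau_n]}$ to get $\sup_n\EE|I(t\wedge\tau_n)|^q<\infty$ with $q>1$, hence uniform integrability and the martingale property in the limit, or simply note that $I(t)$ is the $L^q(\Omega)$-limit of the true martingales obtained from bounded simple integrands, and the martingale property passes to $L^1$-limits. With that repair, and granting the standard approximation of left-continuous adapted processes by simple ones in $L^{\alpha}(\Omega\times[0,T])$ that you invoke, the proof is complete.
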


\begin{rmk}
In this paper, we use the positive constant $C$, which is independent of $\Delta$, to denote a generic constant that may vary at different instances.
\end{rmk}

\section{Properties of the underlying solution}\label{sec3}
In this section, we first prove that SDE \eqref{mainSDE} has a unique positive solution $x(t)$. We then prove the boundedness of the $q$th moments of the underlying solution $x(t)$ for $q\in [1,\alpha)$.
\begin{theorem}\label{thm:possol}
Let Assumptions \ref{assp:coef} and \ref{assp:levy} hold.
For any initial value $x_0 > 0$, the SDE \eqref{mainSDE} has a unique global positive solution for all $t > 0$ with probability one, i.e.
\begin{equation}\label{x_positive}
\PP \left( x(t) \in \RR_+~\text{for all}~t>0 \right)=1.
\end{equation}
\end{theorem}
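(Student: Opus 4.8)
The plan is to run the classical Lyapunov-function-plus-stopping-time argument (in the spirit of Mao's treatment of scalar SDEs), now in the jump setting through the It\^o formula of Lemma~\ref{Ito}. The coefficients $f(x)=\mu-\lambda x$ and $g(x)=\kappa x$ are globally Lipschitz on $\RR$, so the standard theory for SDEs driven by L\'evy processes (cf.\ \cite{D.Applebaum2009,OksendalSulem2000}) already yields a unique c\`adl\`ag adapted maximal local solution $\{x(t)\}_{t\in[0,\tau_e)}$, with $\tau_e$ the explosion time. Fix an integer $k_0\ge1$ with $1/k_0<x_0<k_0$ and, for $k\ge k_0$, set $\tau_k=\inf\{t\in[0,\tau_e):x(t)\notin(1/k,k)\}$ (with $\inf\emptyset=\infty$) and $\tau_\infty=\lim_{k\to\infty}\tau_k\le\tau_e$. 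For $t<\tau_k$ we have $x(t)\in(1/k,k)\subset\RR_+$, so it is enough to prove $\tau_\infty=\infty$ a.s.; this simultaneously rules out explosion to $+\infty$ and absorption at $0$, which gives $\tau_e=\infty$, \eqref{x_positive}, and (uniqueness being already part of the local theory) the theorem.

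The engine is a single Lyapunov function that blows up at both ends of $(0,\infty)$; I would take $V(x)=x-\log x$, which is $C^2$ on $(0,\infty)$, satisfies $V\ge1$, and has $V(x)\to\infty$ as $x\to0^+$ and as $x\to+\infty$ (off $[1/k,k]$ one extends $V$ to a $C^2(\RR)$ function, which is immaterial for the stopped process). Applying Lemma~\ref{Ito} to $V(x(t\wedge\tau_k))$ and taking expectations (the compensated integrals are true martingales on $[0,t\wedge\tau_k]$ since there $x$ is bounded and $\int_{|z|>1}|z|\,\nu(dz)<\infty$ as $\alpha>1$) gives
\begin{align*}
\EE\big[V(x(t\wedge\tau_k))\big]&=V(x_0)+\EE\int_0^{t\wedge\tau_k}\mathcal{L}V(x(s))\,ds,\quad\text{where}\\
\mathcal{L}V(x)&=V'(x)(\mu-\lambda x)+\int_{\RR\setminus\{0\}}\Big(V(x+\kappa xz)-V(x)\\
&\qquad\qquad\qquad-V'(x)\kappa xz\,\ind_{\{|z|\le1\}}\Big)\nu(dz),
\end{align*}
and the heart of the proof is the one-sided bound $\mathcal{L}V(x)\le C(1+V(x))$ for all $x>0$.

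To establish it I would split $\mathcal{L}V$ into drift, small jumps $|z|\le1$, and large jumps $|z|>1$. The drift equals $\mu+\lambda-\lambda x-\mu/x\le\mu+\lambda$. For small jumps, a second-order Taylor expansion with remainder plus the bound $\xi\ge x(1-\kappa)>0$ on the intermediate point (using $0<\kappa<1$) makes the integrand at most $\tfrac12\kappa^2z^2/(1-\kappa)^2$, which is $\nu$-integrable over $\{|z|\le1\}$ because $\int_{|z|\le1}z^2\,\nu(dz)<\infty$ for $\alpha<2$; so this piece is a finite constant. For large jumps, $V(x+\kappa xz)-V(x)=\kappa xz-\log(1+\kappa z)$: for $z>1$ the $-\log(1+\kappa z)$ term is negative and $\int_{z>1}\kappa xz\,\nu(dz)=C\kappa x\le CV(x)$ is finite since $\alpha>1$; for $-1/\kappa<z<-1$ the term $\kappa xz$ is negative and the only possibly large contribution, $\int_{-1/\kappa}^{-1}\big(-\log(1+\kappa z)\big)\nu(dz)$, is finite because there $|z|^{-\alpha-1}\le1$ and $\int_0^{1-\kappa}(-\log u)\,du<\infty$ after the substitution $u=1+\kappa z$. (This is precisely where Assumption~\ref{assp:levy} is used: $z>-1/\kappa$ both makes $V(x(1+\kappa z))$ well defined and keeps the singularity at $u=0$ integrable.) Combining the three pieces gives $\mathcal{L}V(x)\le C(1+V(x))$, and Gronwall's inequality then yields $\EE[V(x(t\wedge\tau_k))]\le(V(x_0)+Ct)e^{Ct}$, uniformly in $k$. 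Finally, every exit from $(1/k,k)$ lands at a strictly positive point (a continuous exit at $\{1/k,k\}$, a jump exit at $x(\tau_k)=x(\tau_k-)(1+\kappa\Delta L^\alpha(\tau_k))>0$ by Assumption~\ref{assp:levy}), so on $\{\tau_k\le T\}$ one has $V(x(\tau_k))\ge V(1/k)\wedge V(k)\to\infty$, whence $\PP(\tau_k\le T)\le(V(x_0)+CT)e^{CT}/\big(V(1/k)\wedge V(k)\big)\to0$; since $\{\tau_\infty\le T\}\subset\{\tau_k\le T\}$ for every $k$, we get $\PP(\tau_\infty\le T)=0$ for all $T$, i.e.\ $\tau_\infty=\infty$ a.s.

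The main obstacle is choosing $V$ so that the L\'evy integral in $\mathcal{L}V$ stays finite: the downward jumps with $z$ close to $-1/\kappa$ force $V$ to grow no faster than logarithmically as $x\to0^+$ (equivalently like $x^{-\theta}$, $\theta\in(0,1)$); a natural-looking choice such as $V(x)=x^{-1}+x$ would make $\int_{-1/\kappa}^{-1}V(x(1+\kappa z))\,\nu(dz)$ diverge, while the upward jumps require $\alpha>1$ to make $\int_{|z|>1}|z|\,\nu(dz)$ finite; the rest is bookkeeping. As a cross-check one may note that, \eqref{mainSDE} being linear, variation of parameters writes $x(t)$ as the product of $e^{-\lambda t}$, the Dol\'eans--Dade exponential $\mathcal{E}(\kappa L^\alpha)(t)$, and a strictly positive continuous factor, and Assumption~\ref{assp:levy} forces every jump of $\mathcal{E}(\kappa L^\alpha)$ to satisfy $1+\kappa\Delta L^\alpha(s)>0$, so $\mathcal{E}(\kappa L^\alpha)$ is strictly positive and a.s.\ finite for all $t$, delivering positivity and non-explosion simultaneously.
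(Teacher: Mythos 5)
Your proposal is correct, and it follows the same overall skeleton as the paper (stopping times $\tau_k$, a Lyapunov function blowing up at $0$ and $+\infty$, the It\^o formula of Lemma~\ref{Ito}, taking expectations, and letting $k\to\infty$), but the key estimate is handled differently. The paper works with $V(x)=x^{0.5}-1-0.5\log x$ and splits the generator into $J_1$, $J_{21}$, $J_{22}$; it then invokes the drift-dominance inequality $\lambda>2\kappa^{0.5}C_{\alpha}/(2\alpha-1)$ of Assumption~\ref{assp:coef} so that the $x^{0.5}$ contribution of the large positive jumps is absorbed by the negative drift term, yielding a \emph{constant} bound $J_1+J_2<C\,(t\wedge\tau_R)$ and hence $\EE[V(x(T\wedge\tau_R))]\le V(x_0)+CT$ directly. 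You instead take $V(x)=x-\log x$, accept the linear-in-$V$ growth coming from $\int_{z>1}\kappa xz\,\nu(dz)$ (finite because $\alpha>1$), prove only $\mathcal{L}V\le C(1+V)$, and close with Gronwall to get $\EE[V(x(t\wedge\tau_k))]\le(V(x_0)+Ct)e^{Ct}$; the treatment of the small jumps (Taylor with the intermediate point bounded below by $x(1-\kappa)$) and of the downward jumps near $-1/\kappa$ (where Assumption~\ref{assp:levy} keeps $1+\kappa z>0$ and $\int(-\log(1+\kappa z))\,\nu(dz)<\infty$) matches the role these play in the paper's $J_{21}$, $J_{22}$. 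What your route buys is that the parameter restriction in Assumption~\ref{assp:coef} (and $\mu>1$) is never used for this theorem—only positivity of the parameters, $\kappa<1$, $\alpha\in(1,2)$ and Assumption~\ref{assp:levy}—so your argument is slightly more general, at the cost of an extra (harmless) exponential factor; the Dol\'eans--Dade cross-check is a nice sanity argument, though not needed. One small imprecision: the claim that extending $V$ to $C^2(\RR)$ is ``immaterial for the stopped process'' is not quite accurate, since the jump terms evaluate $V$ at $x(s-)(1+\kappa z)$, which can leave $[1/k,k]$; what saves you (exactly as it implicitly saves the paper, whose $V$ is also only defined on $(0,\infty)$) is that by Assumption~\ref{assp:levy} these destinations remain in $(0,\infty)$, where your genuine $V$ is $C^2$, so the It\^o formula for functions smooth on $(0,\infty)$ applied up to $\tau_k$ is all that is needed.
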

\begin{proof}
Let $R_0>0$ be sufficiently large such that $R_0>x_0$. For each integer $R>R_0$, define a stopping time
\begin{equation}\label{tauR}
\tau_R := \inf \left\{ t >0: x(t) \not\in (R^{-1},R)\right\}.
\end{equation}
To prove the assertion, we need to show that for any fixed $T > 0$, $\PP(\tau_{\infty} \leq T) = \lim_{R \rightarrow \infty} \PP(\tau_R \leq T) = 0$.
\par
Choose a $C^2(\RR_+,\RR_+)$ function
\begin{equation*}
V(x) = x^{0.5} - 1 - 0.5\log x,
\end{equation*}
with the corresponding derivatives given by 
\begin{equation*}
V'(x)=\frac{1}{2}x^{-0.5}-\frac{1}{2}x^{-1} ~~\text{and}~~V''(x)=-\frac{1}{4}x^{-1.5}+\frac{1}{2}x^{-2}.
\end{equation*}
By the It\^o formula, we have
\begin{equation}\label{posithm:Vterm}
V(x(t \wedge \tau_R)) = V(x_0) + J_1  + J_2 + M_t^1,
\end{equation}
where
\begin{align*}
J_1 &= \int_0^{t \wedge \tau_R} V'(x(s))\left(\mu - \lambda x(s) \right)ds \\
&= \frac{1}{2} \int_0^{t \wedge \tau_R} \left( \mu x^{-0.5}(s) - \lambda x^{0.5}(s) - \mu x^{-1}(s) + \lambda \right) ds,
\end{align*}
\begin{align*}
J_2 &=  \int_0^{t \wedge \tau_R} \int_{\RR \backslash \{0\}}\Big( V(x(s-)+\kappa x(s-)z) - V(x(s-)) \\
&\qquad \qquad\qquad\qquad - \kappa x(s-)z V'(x(s-)) \mathbb{I}_{\{|z| \leq 1\}}(z)   \Big) \nu(dz) ds\\
&=\int_0^{t \wedge \tau_R} \int_{0<|z|\leq 1}\Big( V(x(s-)+\kappa x(s-)z) - V(x(s-)) - \kappa x(s-)z V'(x(s-))   \Big) \nu(dz) ds \\
&\quad +\int_0^{t \wedge \tau_R} \int_{|z|>1}\Big( V(x(s-)+\kappa x(s-)z) - V(x(s-))    \Big) \nu(dz) ds \\
&= J_{21} + J_{22},
\end{align*}
and $M_t^1$ is a local martingale of the form
\begin{equation*}
M_t^1 =  \int_0^{t \wedge \tau_R} \int_{\RR \backslash \{0\}}\Big( V(x(s-)+\kappa x(s-)z) - V(x(s-))   \Big) \widetilde N (ds,dz).
\end{equation*}
To estimate $J_{21}$, applying Taylor's expansion
\begin{equation*}
V(x + \Delta x) = V(x) + V'(x)\Delta x + (\Delta x)^2 \int_0^1 (1 - \theta)V''(x+\theta \Delta x) d \theta,
\end{equation*}
we have
\begin{align*}
J_{21}&= \kappa^2 \int_0^{t \wedge \tau_R} x^{2}(s-) \int_{0<|z|\leq 1} z^2 \int_0^1 (1-\theta) \bigg(-\frac{1}{4} \left( x(s-)+ \theta \kappa z x(s-) \right)^{-1.5} \\
&\quad + \frac{1}{2}\left( x(s-)+ \theta \kappa z x(s-) \right)^{-2 }  \bigg) d \theta \nu(dz) ds.
\end{align*}
Since $0<\kappa <1$ and $x(s-) > 0$ for $s \in (0,t \wedge \tau_R)$, 
\begin{equation*}
-\frac{1}{4} \left( x(s-)+ \theta \kappa z x(s-) \right)^{-1.5}  \leq 0.
\end{equation*}
Thus, we can see
\begin{align*}
J_{21} &< \kappa^2 \int_0^{t \wedge \tau_R} x^{2}(s-) \int_{0<|z|\leq 1} z^2 \int_0^1 \frac{1-\theta}{2}(1+\theta\kappa z)^{-2} x^{-2}(s-) d \theta \nu(dz) ds \\
&<\kappa^2\int_0^{t \wedge \tau_R}x^{2}(s-)\int_{-1}^{0}\frac{1}{2}z^2\frac{\kappa z-\log(1+\kappa z)}{\kappa^2z^2}x^{-2}(s-)\nu(dz)ds\\
&\quad +\kappa^2\int_{0}^{t\wedge\tau_R}x^2(s-)\int_{0}^{1}z^2\int_{0}^{1}\frac{(1-\theta)}{2}x^{-2}(s-)d\theta\nu(dz)ds\\
&=\frac{1}{2}\int_{-1}^{0}\left(\kappa z-\log(1+\kappa z)\right)\frac{C_{\alpha}}{(-z)^{\alpha+1}}dz\int_0^{t \wedge \tau_R} 1 ds+\kappa^2\int_{0}^{1}z^2\frac{1}{4}\frac{C_{\alpha}}{z^{\alpha+1}}dz\int_0^{t \wedge \tau_R} 1 ds,
\end{align*}
where the equality above is obtained by \eqref{nu} and calculating the integrals in terms of $\theta$. Let $y=\kappa z$, then we have
\begin{align*}
\int_{-1}^{0}\left(\kappa z-\log(1+\kappa z)\right)\frac{C_{\alpha}}{(-z)^{\alpha+1}}dz&=\kappa^{\alpha}\int_{-\kappa}^{0}\frac{y-\log(1+y)}{(-y)^2}\frac{C_{\alpha}}{(-y)^{\alpha-1}}dy\\
&\leq  \frac{C_{\alpha}\left(-\kappa-\log(1-\kappa)\right)}{2-\alpha},
\end{align*}
where we used \eqref{nu} and the fact that
\begin{equation*}
\frac{y-\log(1+y)}{(-y)^2}\leq \frac{-\kappa-\log(1-\kappa)}{\kappa^2},~~y\in[-\kappa,0).
\end{equation*}
Therefore, we have
\begin{equation}\label{J21}
J_{21}<\left(\frac{C_{\alpha}\left(-\kappa-\log(1-\kappa)\right)}{2(2-\alpha)}+\frac{\kappa^2C_{\alpha}}{4(2-\alpha)}\right)\int_0^{t \wedge \tau_R} 1 ds.
\end{equation}
\par
To estimate $J_{22}$, by Assumption \ref{assp:levy}  and the definition of $V(x)$ we have
\begin{align*}
J_{22} &= \int_0^{t \wedge \tau_R} \int_{|z|>1}\left( V(x(s-)+\kappa x(s-)z) - V(x(s-))  \right) \nu(dz) ds\\
&=\int_{0}^{t\wedge \tau_R}\int_{|z|>1}\left(\left(x(s-)+\kappa x(s-)z\right)^{0.5}-x^{0.5}(s-)-\frac{1}{2}\log(1+\kappa z)\right)\nu(dz) ds\\
&< \int_{0}^{t\wedge\tau_R}\int_{-\frac{1}{\kappa}}^{-1}\left(\left(x(s-)+\kappa x(s-)z\right)^{0.5}-x^{0.5}(s-)\right)\nu(dz)ds\\
&\quad + \int_{0}^{t\wedge\tau_R}\int_{1}^{+\infty}\left(\left(x(s-)+\kappa x(s-)z\right)^{0.5}-x^{0.5}(s-)\right)\nu(dz)ds\\
&\leq \int_{0}^{t\wedge\tau_R}\int_{-\frac{1}{\kappa}}^{-1}x^{0.5}(s-)\left((1+\kappa z)^{0.5}-1\right)\nu(dz)ds\\
&\quad +\int_{0}^{t\wedge\tau_R}\int_{1}^{+\infty}\kappa^{0.5}x^{0.5}(s-)z^{0.5}\nu(dz)ds\\
&\leq \int_{0}^{t\wedge\tau_R}\int_{1}^{+\infty}\kappa^{0.5}x^{0.5}(s-)z^{0.5}\nu(dz)ds,
\end{align*}
where we used the facts that  $x(s-)>0$ for $s \in (0, t \wedge\tau_R)$ and
\begin{equation*}
(1+\kappa z)^{0.5}-1<0, ~~\text{where}~~z\in(-\frac{1}{\kappa},-1),~\kappa\in(0,1).
\end{equation*}
Then we can get
\begin{equation*}
J_{22}<\frac{2\kappa^{0.5}C_{\alpha}}{2\alpha-1}\int_0^{t \wedge \tau_R}x^{0.5}(s-) ds,
\end{equation*}
where the inequality above is obtained by calculating the integral with respect to $z$ with $\nu(dz) = C_{\alpha} |z|^{-\alpha -1}dz$.
\par
Thanks to Assumption \ref{assp:coef}, from the definition of $J_1$ and the estimates of $J_{12}$ and $J_{22}$ above, we can see that terms with the largest and the smallest powers are $\left(-\lambda+\frac{2\kappa^{0.5}C_{\alpha}}{2\alpha-1} \right)x^{0.5}(s)$ and $-\mu x^{-1}(s)$. Since both of the coefficients of those two terms are negative, by the basic property of polynomials we can conclude
\begin{equation*}
J_1 + J_2 < \int_0^{t \wedge \tau_R} C ds,
\end{equation*}
where $C$ is a constant dependent on $\mu, \lambda, \kappa $.
\par
Taking expectations on both sides of \eqref{posithm:Vterm} and using the fact $\EE[M_t^1] = 0$ yields
\begin{equation}\label{posithm:Vest}
\EE \left[V(x(T \wedge \tau_R)) \right] < V(x_0) + C\EE[T\wedge \tau_R]\leq V(x_0) + CT.
\end{equation}
It is not hard to see that $V(x)$ is decreasing for $x \in (0,1)$ and increasing for $x \in [1, +\infty)$. In addition, from the definition of the stopping time $\tau_R$ we have
\begin{equation*}
x(\tau_R) \geq R \quad \text{or} \quad x(\tau_R) \leq R^{-1}.
\end{equation*}
Now, we have the following estimate
\begin{equation*}
V(x(\tau_R)) \geq V(R) \wedge V(R^{-1}).
\end{equation*}
Using \eqref{posithm:Vest} together with the estimate that
\begin{equation*}
\PP(\tau_R \leq T) \left( V(R) \wedge V(R^{-1}) \right) \leq \EE \left[\mathbb{I}_{\{\tau_R \leq T\}} V(x(\tau_R))\right] \leq  \EE \left[V(x(T\wedge \tau_R))\right],
\end{equation*}
we obtain
\begin{equation*}
\PP(\tau_R \leq T) \leq \frac{V(x_0) +CT}{V(R) \wedge V(R^{-1})}.
\end{equation*}
Letting $R \rightarrow \infty$ gives $\PP(\tau_{\infty} \leq T) = 0$, which completes the proof.  \eproof
\end{proof}
The next lemma is to show the moment boundedness of the solution to \eqref{mainSDE}.

\begin{lemma}\label{lemma:1}
For any $q \in [1, \alpha)$, there exists a positive constant C such that
\begin{equation*}
\sup_{t \in [0,T]} \EE \left[x^{q}(t) \right] \leq C, \quad \text{for any} ~T>0.
\end{equation*}
\end{lemma}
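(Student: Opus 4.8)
The plan is to apply the It\^o formula of Lemma \ref{Ito} to $V(x)=x^q$ (which is smooth on $(0,\infty)$, where the positive solution and all of its post-jump values live), localised by the stopping times $\tau_R$ defined in \eqref{tauR}, to derive a Gr\"onwall estimate uniform in $R$, and then to let $R\to\infty$ via Fatou's lemma. Recall from the proof of Theorem \ref{thm:possol} that $\tau_R\uparrow\infty$ a.s.

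The first, and main, step is a structural observation together with an integrability check. For $s\in(0,t\wedge\tau_R)$ we have $x(s-)>0$, and since $g(x)=\kappa x$ the integrand in the compensator term of Lemma \ref{Ito} factorises as
\begin{equation*}
V\big(x(s-)+\kappa x(s-)z\big)-V(x(s-))-qx^{q-1}(s-)\,\kappa x(s-)z\,\mathbb{I}_{\{|z|\leq1\}}(z)=x^q(s-)\,h(z),
\end{equation*}
with $h(z):=(1+\kappa z)^q-1-q\kappa z\,\mathbb{I}_{\{|z|\leq1\}}(z)$. The crux of the proof is that the constant $\eta:=\int_{z>-1/\kappa}h(z)\,\nu(dz)$ is finite. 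By Assumption \ref{assp:levy} the measure $\nu$ is carried by $(-1/\kappa,\infty)$, which I would split into three regions: on $0<|z|\leq1$ a second-order Taylor expansion gives $|h(z)|\leq Cz^2$, which is $\nu$-integrable near the origin because $\alpha<2$; on $-1/\kappa<z<-1$ one has $0<1+\kappa z<1-\kappa$, so $|h(z)|\leq1$ while $\nu$ restricted to this set is finite; and on $z>1$ one has $0\le h(z)=(1+\kappa z)^q-1\leq Cz^q$, so $\int_1^\infty h(z)\,\nu(dz)\leq C\int_1^\infty z^{q-\alpha-1}\,dz<\infty$ \emph{precisely because} $q<\alpha$. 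This last estimate is the only place the hypothesis $q\in[1,\alpha)$ is used, and it is where I expect the real work to lie.

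Granting the finiteness of $\eta$, I would take expectations in the It\^o formula, the $\widetilde N$-integral having zero mean on $[0,t\wedge\tau_R]$ for each fixed $R$ by a routine localisation argument (there $x^q\leq R^q$, the small-jump part is square-integrable against $\nu\times ds$, and the large-jump part is $\nu\times ds$-integrable), to get
\begin{equation*}
\EE\big[x^q(t\wedge\tau_R)\big]=x_0^q+\EE\!\int_0^{t\wedge\tau_R}\Big(q\mu\,x^{q-1}(s)+(\eta-q\lambda)\,x^q(s)\Big)ds.
\end{equation*}
Since $0\le q-1<q$ we have $x^{q-1}\leq1+x^q$ on $\RR_+$, so the integrand is bounded above by $C(1+x^q(s))$ with $C$ depending only on $\mu,\lambda,\kappa,q,\alpha$. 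Writing $\phi_R(t):=\EE[x^q(t\wedge\tau_R)]$ and using $\EE\int_0^{t\wedge\tau_R}x^q(s)\,ds\leq\int_0^t\phi_R(s)\,ds$, this yields $\phi_R(t)\leq x_0^q+CT+C\int_0^t\phi_R(s)\,ds$ on $[0,T]$, whence $\phi_R(t)\leq(x_0^q+CT)e^{CT}$ by Gr\"onwall's inequality, a bound independent of $R$ and of $t\in[0,T]$. Finally, because $\tau_R\uparrow\infty$ a.s., $x^q(t\wedge\tau_R)\to x^q(t)$ a.s.\ as $R\to\infty$, and Fatou's lemma gives $\EE[x^q(t)]\leq\liminf_{R\to\infty}\phi_R(t)\leq(x_0^q+CT)e^{CT}$ for every $t\in[0,T]$, which is the assertion.
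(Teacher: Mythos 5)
Your proof is correct and follows essentially the same route as the paper's: apply the It\^o formula to $x^q$ under a localization, control the jump compensator by splitting into small jumps (Taylor expansion, using $\alpha<2$), bounded negative jumps, and positive jumps $z>1$ (the only place $q<\alpha$ enters), then conclude via Gr\"onwall and Fatou. Your factorization of the compensator integrand as $x^q(s-)h(z)$ with $\int h\,d\nu<\infty$ is just a tidier packaging of the paper's explicit estimates of $I_{21}$ and $I_{22}$, and your use of the two-sided stopping time $\tau_R$ in place of the paper's $\eta_R$ is immaterial.
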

\begin{proof}
From Theorem \ref{thm:possol}, we know the solution stays positive for any $t>0$. So, for some $R >x_0$ we define a stopping time
\begin{equation*}
\eta_R := \inf \left\{ t >0: x(t) \geq R\right\}.
\end{equation*}
For any fixed $T > 0$ and $q \in [1, \alpha)$, by the It\^o formula we have that for any $t\in [0,T]$
\begin{equation}\label{qmomentthm:meq}
x^q(t\wedge \eta_R) = x_0^q + I_1 + I_2 + M_t^2,
\end{equation}
where
\begin{equation*}
I_1 := \int_0^{t \wedge \eta_R}  qx^{q-1}(s)(\mu-\lambda x(s)) ds=\int_0^{t \wedge \eta_R} \left( q\mu x^{q-1}(s) - q\lambda x^{q}(s)\right) ds,
\end{equation*}
\begin{align*}
I_2 &:=  \int_0^{t \wedge \eta_R} \int_{0<|z|\leq 1} \left( \left( x(s-) + \kappa x(s-)z \right)^q  - x^q(s-) - q x^{q-1}(s-) \kappa x(s-) z \right) \nu(dz) ds \\
&\quad + \int_0^{t \wedge \eta_R} \int_{|z|>1} \left( \left( x(s-) + \kappa x(s-)z \right)^q - x^q(s-) \right) \nu(dz) ds \\
&=: I_{21} + I_{22},
\end{align*}
and $M_t^2$ is a local martingale of the form
\begin{equation*}
M_t^2 :=  \int_0^{t \wedge \eta_R} \int_{\RR \backslash \{0\}}\left( \left( x(s-) + \kappa x(s-)z \right)^q - x^q(s-) \right) \widetilde N (ds,dz).
\end{equation*}
Now applying the similar ideas of estimating $J_{21}$ and $J_{22}$ in the proof of Theorem \ref{thm:possol}, we obtain the following estimates of $I_{21}$ and $I_{22}$
\begin{equation*}
I_{21} < \frac{q(q-1)(\kappa^2+2)C_{\alpha} }{2(2-\alpha)} \int_0^{t \wedge \eta_R} x^q (s-) ds
\end{equation*}
and
\begin{equation*}
I_{22}\leq \frac{2\kappa^{q}C_{\alpha}}{\alpha-q}\int_{0}^{t\wedge\eta_R}x^{q}(s-)ds.
\end{equation*}
Then,
\begin{equation*}
I_2<\left(\frac{q(q-1)(\kappa^2+2)C_{\alpha}}{2(2-\alpha)} +\frac{2\kappa^qC_{\alpha}}{\alpha - q}\right)\int_{0}^{t\wedge \eta_R}x^q(s-)ds.
\end{equation*}
Taking expectations on both sides of \eqref{qmomentthm:meq}, we have 
\begin{align*}
&~~~~\EE[x^q(t \wedge \eta_R)] \\
&< x^q_0+\EE\left[\int_{0}^{t\wedge \eta_R}\left(q\mu x^{q-1}(s)+\left(\frac{q(q-1)(\kappa^2+2)C_{\alpha}}{2(2-\alpha)}+\frac{2\kappa^qC_{\alpha}}{\alpha - q}-q\lambda\right)x^q(s)\right)ds\right]\\
&\leq x_0^q+q\mu T+\EE\left[\int_{0}^{t\wedge \eta_R}\left(q(\mu-\lambda)+\frac{q(q-1)(\kappa^2+2)C_{\alpha}}{2(2-\alpha)}+\frac{2\kappa^qC_{\alpha}}{\alpha - q}\right)x^q(s)ds\right]\\
&\leq C_1+C_2\EE\left[\int_{0}^{t\wedge \eta_R}x^q(s)ds\right]\\
&\leq C_1+C_2\int_{0}^{t}\EE\left[x^q(s\wedge \eta_R)\right]ds,
\end{align*}
where $C_1=x_0^q+q\mu T$, $C_2$ is a constant satisfies $C_2\geq q(\mu-\lambda)+ \frac{q(q-1)(\kappa^2+2)C_{\alpha}}{2(2-\alpha)}+\frac{2\kappa^qC_{\alpha}}{\alpha - q}$.
The Gronwall inequality shows
\begin{equation*}
\EE[x^q(t \wedge \eta_R)]\leq C_1 e^{C_2t}.
\end{equation*}
Finally, we complete the proof by the Fatou lemma. \eproof
\end{proof}

\section{Positivity preserving EM method}\label{sec4}
In this section, we first construct the positivity preserving EM method for the model \eqref{mainSDE}.
Let $\Delta\in(0,1)$ be the step size, $N=\lfloor T/\Delta\rfloor$. Grid points $t_k$ are taken as $t_k=k\Delta$, $k=0,1,2,\ldots, N-1$. We form the discrete-time positivity preserving EM solution by 
\begin{equation}\label{Num}
\left\{
\begin{array}{lr}
X_{\Delta}(t_{k+1})={X}_{\Delta}(t_{k})+\left(\mu-\lambda\widetilde{X}_{\Delta}(t_{k})\right)\Delta+\kappa\widetilde{X}_{\Delta}(t_{k})\Delta L^{\alpha}_k, \\
\widetilde{X}_{\Delta}(t_{k+1})=X_{\Delta}(t_{k+1})\vee \Delta,
\end{array}
\right.
\end{equation}
where $X_{\Delta}(0)=\widetilde{X}_{\Delta}(0)=x_0$, $\Delta L^{\alpha}_k=L^{\alpha}(t_{k+1})-L^{\alpha}(t_{k})$, and $\widetilde{X}_{\Delta}(t_k)$ is the approximate to $x(t_k)$.

We define $\eta(t)=t_k$, for any $t\in(t_k,t_{k+1}]$. Thus,
\begin{equation*}
\widetilde{X}_{\Delta}(\eta(t))={X}_{\Delta}(\eta(t))\vee \Delta.
\end{equation*}
The continuous time version of \eqref{Num} is given by
\begin{equation}\label{continuousPPEM}
\tilde{x}_{\Delta}(t)=x_0+\int_{0}^{t}\left(\mu-\lambda\widetilde{X}_{\Delta}(\eta(s))\right)ds+\int_{0}^{t}\kappa\widetilde{X}_{\Delta}(\eta(s-))dL^{\alpha}(s).
\end{equation}
Clearly,
$\tilde{x}_{\Delta}(t_k)=X_{\Delta}(t_k)$ for $k=0,1,2,\ldots ,N$.

Next, we present three lemmas to prove the strong convergence.

\begin{lemma}\label{lemma:2}
For $\alpha\in(1,2)$ and $q\in[1,\alpha)$, we have 
\begin{equation*}
\sup_{0\leq t\leq T}\EE\left[\left|\tilde{x}_{\Delta}(t)\right|^q\right]\leq C,
\end{equation*}
for some constant $C=C(q,T,\alpha,\mu,\lambda,\kappa,x_0)$. 
\end{lemma}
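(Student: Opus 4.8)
The plan is to reduce the estimate to a uniform bound on the discrete skeleton $\EE|X_\Delta(t_k)|^q$, $0\le k\le N$, and then obtain that bound from a one--step conditional estimate together with the discrete Gronwall inequality. First, for $t\in(t_k,t_{k+1}]$ formula \eqref{continuousPPEM} gives
\[
\tilde x_\Delta(t)=X_\Delta(t_k)+\big(\mu-\lambda\widetilde X_\Delta(t_k)\big)(t-t_k)+\kappa\widetilde X_\Delta(t_k)\big(L^\alpha(t)-L^\alpha(t_k)\big),
\]
so by $|a+b+c|^q\le 3^{q-1}(|a|^q+|b|^q+|c|^q)$, by $\widetilde X_\Delta(t_k)=X_\Delta(t_k)\vee\Delta\le|X_\Delta(t_k)|+1$, by the independence of $L^\alpha(t)-L^\alpha(t_k)$ from $\mathcal{F}_{t_k}$, and by Lemma \ref{L_alpha_moment} applied to an increment of length $t-t_k\le\Delta<1$, I would get $\EE|\tilde x_\Delta(t)|^q\le C(1+\EE|X_\Delta(t_k)|^q)$ with $C$ independent of $\Delta$. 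Hence it suffices to prove $\max_{0\le k\le N}\EE|X_\Delta(t_k)|^q\le C$.

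From \eqref{Num} write $X_\Delta(t_{k+1})=A_k+B_k\,\Delta L^\alpha_k$ with $A_k=X_\Delta(t_k)+(\mu-\lambda\widetilde X_\Delta(t_k))\Delta$ and $B_k=\kappa\widetilde X_\Delta(t_k)$, both $\mathcal{F}_{t_k}$--measurable, while $\Delta L^\alpha_k$ is independent of $\mathcal{F}_{t_k}$, has mean zero, obeys $\Delta L^\alpha_k\stackrel{d}{=}\Delta^{1/\alpha}\zeta$ for a standard $\alpha$--stable $\zeta$, and satisfies $\PP(|\Delta L^\alpha_k|>x)\le C\Delta x^{-\alpha}$; a trivial induction shows each $\EE|X_\Delta(t_k)|^q$ is finite. (The continuous--time route via Lemma \ref{L_alpha_moment_2} is awkward here because $\kappa\widetilde X_\Delta$ has \emph{infinite} $\alpha$--th moment, so that lemma's hypothesis fails unless one localizes, and a naive localization produces a bound that does not close as the localization is removed.) The essential difficulty is that the crude estimate $\EE[|X_\Delta(t_{k+1})|^q\mid\mathcal{F}_{t_k}]\le|A_k|^q+C\,\EE|\Delta L^\alpha_k|^q\,B_k^q$ is useless, since $\EE|\Delta L^\alpha_k|^q$ is of order $\Delta^{q/\alpha}\gg\Delta$ and $(1+C\Delta^{q/\alpha})^{T/\Delta}\to\infty$; one must exploit both $\EE\Delta L^\alpha_k=0$ and the exact tail exponent to see that the true one--step contribution of the jump is only $O(\Delta)$.

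I would split according to the value of $\widetilde X_\Delta(t_k)$. If $X_\Delta(t_k)\ge\Delta$ then $\widetilde X_\Delta(t_k)=X_\Delta(t_k)$ and $X_\Delta(t_{k+1})=X_\Delta(t_k)W_k+\mu\Delta$ with $W_k:=1-\lambda\Delta+\kappa\Delta L^\alpha_k$. Since $\EE[W_k-1]=-\lambda\Delta$, the elementary second--order inequality $\big||1+h|^q-1-qh\big|\le C_q(h^2\wedge|h|^q)$ gives
\[
\EE|W_k|^q\le 1-q\lambda\Delta+C_q\,\EE\big[(\Delta L^\alpha_k)^2\ind_{\{|\Delta L^\alpha_k|\le M\}}\big]+C_q\,\EE\big[|\Delta L^\alpha_k|^q\ind_{\{|\Delta L^\alpha_k|>M\}}\big]
\]
for a fixed constant $M$; using $\Delta L^\alpha_k\stackrel{d}{=}\Delta^{1/\alpha}\zeta$ together with $\EE[\zeta^2\ind_{\{|\zeta|\le c\}}]\le Cc^{2-\alpha}$ and $\EE[|\zeta|^q\ind_{\{|\zeta|>c\}}]\le Cc^{q-\alpha}$ (valid for all $c>0$ because $\zeta$ has power--law tails of index $\alpha$), each of these two expectations is $\le C\Delta$, so $\EE|W_k|^q\le 1+C\Delta$. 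Then $|X_\Delta(t_{k+1})|\le X_\Delta(t_k)|W_k|+\mu\Delta$ and an elementary estimate on $(u+v)^q$ give $\EE[|X_\Delta(t_{k+1})|^q\mid\mathcal{F}_{t_k}]\le(1+C\Delta)X_\Delta(t_k)^q+C\Delta$. If $X_\Delta(t_k)<\Delta$ then $\widetilde X_\Delta(t_k)=\Delta$ and $X_\Delta(t_{k+1})=\big(X_\Delta(t_k)+(\mu-\lambda\Delta)\Delta\big)+\kappa\Delta\,\Delta L^\alpha_k$; for $\Delta$ small the deterministic shift $(\mu-\lambda\Delta)\Delta>0$ pushes $X_\Delta(t_k)$ towards $0$, so $|X_\Delta(t_k)+(\mu-\lambda\Delta)\Delta|^q\le|X_\Delta(t_k)|^q+C\Delta$, and the mean--zero bound $\EE|a+h|^q\le|a|^q+C\EE|h|^q$ with $\EE|\kappa\Delta\,\Delta L^\alpha_k|^q\le C\Delta^{q+q/\alpha}\le C\Delta$ gives $\EE[|X_\Delta(t_{k+1})|^q\mid\mathcal{F}_{t_k}]\le|X_\Delta(t_k)|^q+C\Delta$.

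Combining the two cases and taking expectations, $\EE|X_\Delta(t_{k+1})|^q\le(1+C\Delta)\EE|X_\Delta(t_k)|^q+C\Delta$ for all $\Delta$ below a threshold $\Delta_0=\Delta_0(\lambda,\kappa,\alpha,\mu,x_0)$; since $X_\Delta(t_0)=x_0$, the discrete Gronwall inequality gives $\max_{0\le k\le N}\EE|X_\Delta(t_k)|^q\le(x_0^q+1)e^{CT}$, and the first paragraph then finishes the proof. For $\Delta\in[\Delta_0,1)$ the number of steps $N\le T/\Delta_0$ is bounded, so a crude version of the recursion suffices. I expect the main obstacle to be exactly the one flagged above: obtaining an $O(\Delta)$, rather than $O(\Delta^{q/\alpha})$, correction in the one--step estimate, which forces one to use the cancellation $\EE\Delta L^\alpha_k=0$ and the precise tail of $\Delta L^\alpha_k$ instead of its $L^q$--norm; a secondary point of care is that $X_\Delta$ need not stay positive, so the truncation $\widetilde X_\Delta=X_\Delta\vee\Delta$ and the positive drift must be used to keep the multiplicative constant at $1+O(\Delta)$ during excursions of $X_\Delta$ below $\Delta$.
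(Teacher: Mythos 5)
Your proposal is correct in substance, but it takes a genuinely different route from the paper's own proof. The paper works directly with the continuous interpolation \eqref{continuousPPEM}: it localizes with the stopping time $\rho_R=\inf\{t\ge 0:|\tilde{x}_{\Delta}(t)|\ge R\}$, applies the stochastic-integral moment bound of Lemma \ref{L_alpha_moment_2}, dominates $|\widetilde{X}_{\Delta}(\eta(s))|^{\alpha}$ by $|\tilde{x}_{\Delta}(\eta(s))|^{\alpha}+\Delta^{\alpha}$, and closes the resulting integral inequality with the nonlinear (Willett--Wong) Gronwall inequality before letting $R\to\infty$. You instead bound the discrete skeleton through a one-step conditional estimate: on $\{X_{\Delta}(t_k)\ge\Delta\}$ you write $X_{\Delta}(t_{k+1})=X_{\Delta}(t_k)W_k+\mu\Delta$ and use the cancellation $\EE[\Delta L^{\alpha}_k]=0$, the second-order inequality, and the exact tail index to get $\EE|W_k|^q\le 1+C\Delta$ (the truncated-moment computations indeed give $O(\Delta)$, not $O(\Delta^{q/\alpha})$, per step), handle $\{X_{\Delta}(t_k)<\Delta\}$ by the von Bahr--Esseen bound, apply a discrete Gronwall argument, and transfer to $\tilde{x}_{\Delta}(t)$ via independence and Lemma \ref{L_alpha_moment}. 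Your key observation --- that a crude $L^q$ bound of the increment yields only $(1+C\Delta^{q/\alpha})^{T/\Delta}\to\infty$ and so cannot close --- is exactly the right diagnosis, and your fix is sound; the case analysis below $\Delta$ also works since the drift overshoot past zero is at most $\mu\Delta$. Comparing what each approach buys: the paper's argument is shorter and, via Lemma \ref{L_alpha_moment_2}, could in principle control $\EE[\sup_{t\le T}|\cdot|^p]$, but as written it produces an inequality with the $q$-th moment on the left and the $\alpha$-th moment inside the integral on the right (a quantity that is not finite without localization and is not dominated by a power of the $q$-th moment, since Jensen goes the wrong way), so its appeal to the nonlinear Gronwall lemma is delicate --- precisely the difficulty you flag in your parenthetical remark. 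Your discrete route avoids the $\alpha$-moment issue altogether, uses only elementary properties of stable increments, and never assumes positivity of $X_{\Delta}$, at the cost of a longer case-by-case analysis and some bookkeeping you elide (the cross terms of order $\Delta^2$ and $\Delta^q\,\PP(|\Delta L^{\alpha}_k|>M)$ in the estimate of $\EE|W_k|^q$, which are indeed harmless).
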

\begin{proof}
For any real number $R>x_0$, define the stopping time 
\begin{equation}\label{rhoR}
\rho_R=\inf\{t\geq 0:|\tilde{x}_{\Delta}(t)|\geq R\}.
\end{equation}
For any $q\in[1,\alpha)$ and any $t\in[0,T]$, by the elementary inequality and H\"older's inequality we derive from \eqref{continuousPPEM} that
\begin{align*}
\left|\tilde{x}_{\Delta}(t\wedge\rho_R)\right|^q&=\left|x_0+\int_{0}^{t\wedge\rho_R}\left(\mu-\lambda\widetilde{X}_{\Delta}(\eta(s))\right)ds+\int_{0}^{t\wedge\rho_R}\kappa\widetilde{X}_{\Delta}(\eta(s-))dL^{\alpha}(s)\right|^q\\
&\leq 3^{q-1}|x_0|^q+3^{q-1}\left|\int_{0}^{t\wedge\rho_R}\left(\mu-\lambda \widetilde{X}_{\Delta}(\eta(s))\right)ds\right|^q\\
&\quad +3^{q-1}\left|\int_{0}^{t\wedge\rho_R}\kappa\widetilde{X}_{\Delta}(\eta(s-))dL^{\alpha}(s)\right|^q\\
&\leq3^{q-1}|x_0|^q+3^{q-1}T^{q-1}\int_{0}^{t\wedge\rho_R}\left|\mu-\lambda\widetilde{X}_{\Delta}(\eta(s))\right|^qds\\
&\quad +3^{q-1}\left|\int_{0}^{t\wedge\rho_R}\kappa\widetilde{X}_{\Delta}(\eta(s-))dL^{\alpha}(s)\right|^q\\
&\leq 3^{q-1}|x_0|^q+6^{q-1}\mu^qT^q+6^{q-1}\lambda^qT^{q-1}\int_{0}^{t}\left|\widetilde{X}_{\Delta}\left(\eta(s)\wedge\rho_R\right)\right|^q ds\\
&\quad +3^{q-1}\left|\int_{0}^{t}\kappa\widetilde{X}_{\Delta}\left(\eta(s-)\wedge\rho_R-\right)dL^{\alpha}(s)\right|^q.
\end{align*}
Taking expectations both sides, by Lemma \ref{L_alpha_moment_2} and the H\"older inequality, we obtain
\begin{align*}
\EE&\left[\left|\tilde{x}_{\Delta}(t\wedge\rho_R)\right|^q\right]\\
&\leq 3^{q-1}|x_0|^q+6^{q-1}\mu^qT^q+6^{q-1}\lambda^qT^{q-1}\int_{0}^{t}\EE\left[\left|\widetilde{X}_{\Delta}\left(\eta(s)\wedge\rho_R\right)\right|^q \right]ds\\
&\quad +3^{q-1}\kappa^qC_{q,\alpha}
\left(\int_{0}^{t}\EE\left[\left|\widetilde{X}_{\Delta}\left(\eta(s-)\wedge\rho_R-\right)\right|^{\alpha}\right]ds\right)^{\frac{q}{\alpha}}\\
&\leq 3^{q-1}|x_0|^q+6^{q-1}\mu^qT^q+6^{q-1}\lambda^qT^{q-\frac{q}{\alpha}}\left(\int_{0}^{t}\EE\left[\left|\widetilde{X}_{\Delta}(\eta(s)\wedge\rho_R)\right|^{\alpha}\right]ds\right)^{\frac{q}{\alpha}}\\
&\quad +3^{q-1}\kappa^qC_{q,\alpha}
\left(\int_{0}^{t}\EE\left[\left|\widetilde{X}_{\Delta}\left(\eta(s-)\wedge\rho_R-\right)\right|^{\alpha}\right]ds\right)^{\frac{q}{\alpha}}\\
&\leq3^{q-1}|x_0|^q+6^{q-1}\mu^qT^q\\
&\quad +\left(6^{q-1}\lambda^qT^{q-\frac{q}{\alpha}}+3^{q-1}\kappa^qC_{q,\alpha}\right)\left(\int_{0}^{t}\Big(\sup_{0\leq \varrho\leq s}\EE\left[\left|\tilde{x}_{\Delta}(\varrho\wedge\rho_R)\right|^{\alpha}\right]+\Delta^{\alpha}\Big)ds\right)^{\frac{q}{\alpha}}\\
&\leq 3^{q-1}|x_0|^q+6^{q-1}\mu^qT^q+6^{q-1}\lambda^qT^{2q}+3^{q-1}\kappa^qC_{q,\alpha}T^{\frac{q(\alpha+1)}{\alpha}}\\
&\quad +\left(6^{q-1}\lambda^qT^{q-\frac{q}{\alpha}}+3^{q-1}\kappa^qC_{q,\alpha}\right)\left(\int_{0}^{t}\sup_{0\leq \varrho\leq s}\EE\left[\left|\tilde{x}_{\Delta}(\varrho\wedge\rho_R)\right|^{\alpha}\right]ds\right)^{\frac{q}{\alpha}}.
\end{align*}
Since the inequality above holds for any $t\in[0,T]$, we can see
\begin{equation*}
\sup_{0\leq \iota \leq t}\EE\left[\left|\tilde{x}_{\Delta}(\iota\wedge\rho_R)\right|^q\right]\leq  A+B\left(\int_{0}^{t}\sup_{0\leq \varrho\leq s}\EE\left[\left|\tilde{x}_{\Delta}(\varrho\wedge\rho_R)\right|^{\alpha}\right]ds\right)^{\frac{q}{\alpha}},
\end{equation*}
where 
\begin{equation*}
A=3^{q-1}|x_0|^q+6^{q-1}\mu^qT^q+6^{q-1}\lambda^qT^{2q}+3^{q-1}\kappa^qC_{q,\alpha}T^{\frac{q(\alpha+1)}{\alpha}}
\end{equation*}
and
\begin{equation*}
B=6^{q-1}\lambda^qT^{q-\frac{q}{\alpha}}+3^{q-1}\kappa^qC_{q,\alpha}.
\end{equation*}
Then, applying the non-linear form of the Gronwall inequality \cite{WillettWong1965} yields
\begin{equation*}
\sup_{0\leq \iota \leq t}\EE\left[\left|\tilde{x}_{\Delta}(\iota\wedge\rho_R)\right|^q\right]\leq A+B\frac{\left(\int_{0}^{t}e(s)A^{\alpha}ds\right)^{\frac{q}{\alpha}}}{1-[1-e(t)]^{\frac{q}{\alpha}}},
\end{equation*}
where 
\begin{equation*}
e(t)=\exp\left(-\int_{0}^{t}B^{\alpha}ds\right).
\end{equation*}
Since $\rho_R\rightarrow\infty$ as $R\rightarrow\infty$, setting $t=T$ and $R\rightarrow\infty$, we can obtain 
\begin{equation*}
\sup_{0\leq t\leq T}\EE\left[|\tilde{x}_{\Delta}(t)|^q\right]\leq C,
\end{equation*}
where $C$ is a constant independent of $\Delta$, which completes the proof. \eproof
\end{proof}

The following lemma indicates that $\tilde{x}_{\Delta}(t)$ and $X_{\Delta}(\eta(t))$ are close to each other.
\begin{lemma}\label{lemma:3}
For any $\alpha\in(1,2)$ and $q\in[1,\alpha)$, we have
\begin{equation*}
\sup_{0\leq t\leq T}\EE\left[\left|\tilde{x}_{\Delta}(t)-{X}_{\Delta}(\eta(t))\right|^q\right]\leq C\Delta^{q/\alpha}.
\end{equation*}
\end{lemma}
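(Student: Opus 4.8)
The plan is to work interval by interval and exploit that on each mesh interval $(t_k,t_{k+1}]$ the coefficients driving the continuous interpolation \eqref{continuousPPEM} are frozen at the $\mathcal F_{t_k}$-measurable value $\widetilde X_\Delta(t_k)$, which makes the local increment completely explicit. First I fix $t\in(0,T]$, write $t_k=\eta(t)$, and note that $\eta(s)=\eta(s-)=t_k$ for all $s\in(t_k,t]$. Subtracting the representation \eqref{continuousPPEM} at times $t$ and $t_k$ and using $\tilde x_\Delta(t_k)=X_\Delta(\eta(t))$, I obtain
\begin{equation*}
\tilde x_\Delta(t)-X_\Delta(\eta(t))=\bigl(\mu-\lambda\widetilde X_\Delta(t_k)\bigr)(t-t_k)+\kappa\widetilde X_\Delta(t_k)\bigl(L^\alpha(t)-L^\alpha(t_k)\bigr),
\end{equation*}
since the stochastic integral over $(t_k,t]$ collapses to $\kappa\widetilde X_\Delta(t_k)\bigl(L^\alpha(t)-L^\alpha(t_k)\bigr)$.

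Next, by $|a+b|^q\le 2^{q-1}(|a|^q+|b|^q)$, $t-t_k\le\Delta$, and $\Delta^q\le\Delta^{q/\alpha}$ (valid since $0<\Delta<1$ and $\alpha>1$), I would bound
\begin{equation*}
\EE\bigl[|\tilde x_\Delta(t)-X_\Delta(\eta(t))|^q\bigr]\le 2^{q-1}\Delta^{q/\alpha}\,\EE\bigl[|\mu-\lambda\widetilde X_\Delta(t_k)|^q\bigr]+2^{q-1}\kappa^q\,\EE\bigl[\widetilde X_\Delta(t_k)^q\,|L^\alpha(t)-L^\alpha(t_k)|^q\bigr].
\end{equation*}
For the first expectation I use $0<\widetilde X_\Delta(t_k)\le|X_\Delta(t_k)|+\Delta=|\tilde x_\Delta(t_k)|+\Delta$ together with Lemma \ref{lemma:2} to get $\EE[\widetilde X_\Delta(t_k)^q]\le C$, hence $\EE[|\mu-\lambda\widetilde X_\Delta(t_k)|^q]\le C$. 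For the second, the crucial point is that $\widetilde X_\Delta(t_k)$ is $\mathcal F_{t_k}$-measurable while $L^\alpha(t)-L^\alpha(t_k)\sim S_\alpha((t-t_k)^{1/\alpha},\beta,0)$ is independent of $\mathcal F_{t_k}$, so the expectation factorises, and Lemma \ref{L_alpha_moment} gives
\begin{equation*}
\EE\bigl[\widetilde X_\Delta(t_k)^q\,|L^\alpha(t)-L^\alpha(t_k)|^q\bigr]=\EE[\widetilde X_\Delta(t_k)^q]\cdot\EE|L^\alpha(t)-L^\alpha(t_k)|^q\le C\bar C(t-t_k)^{q/\alpha}\le C\Delta^{q/\alpha}.
\end{equation*}
Substituting these bounds back and taking the supremum over $t\in[0,T]$ (the value $t=0$ giving a zero difference) yields the stated estimate.

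The delicate point, and the one I would flag, is the choice of machinery for the diffusion term: one should \emph{not} route it through the maximal inequality of Lemma \ref{L_alpha_moment_2} on $(t_k,t]$, since that would demand a bound on $\EE[\widetilde X_\Delta(\eta(s))^\alpha]$ uniform in $\Delta$, whereas $\widetilde X_\Delta$ has no finite moment of order $\alpha$ — the recursion \eqref{Num} contains $\alpha$-stable increments, whose $\alpha$-th absolute moment is infinite, and so is that of $X_\Delta(t_k)$ for $k\ge 1$. Freezing the integrand on a single interval sidesteps this entirely: the stochastic integral reduces to the elementary product above, so the whole argument uses only the finite $q$-th moment ($q<\alpha$) supplied by Lemma \ref{lemma:2}, while the stable increment directly contributes the sharp factor $(t-t_k)^{q/\alpha}\le\Delta^{q/\alpha}$. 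Everything else is routine, so this freezing observation is really the crux.
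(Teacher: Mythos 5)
Your proof is correct and follows essentially the same route as the paper: both freeze the integrand at $\widetilde X_\Delta(t_k)$ on the single interval $(t_k,t]$, split off the drift increment with the elementary inequality, and handle the jump term via independence of the stable increment from $\mathcal F_{t_k}$ together with the $q$-th moment bound of Lemma \ref{L_alpha_moment} and the uniform moment bound of Lemma \ref{lemma:2}. Your closing observation about avoiding Lemma \ref{L_alpha_moment_2} matches the paper's choice here (it invokes the independent-increments property rather than the maximal inequality), so there is nothing to add.
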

\begin{proof}
It follows from \eqref{Num} and \eqref{continuousPPEM} that
\begin{equation*}
\tilde{x}_{\Delta}(t)=X_{\Delta}(t_k)+\int_{t_k}^{t}(\mu-\lambda\widetilde{X}_{\Delta}(\eta(s)))ds+\int_{t_k}^{t}\kappa\widetilde{X}_{\Delta}(\eta(s-))dL^{\alpha}(s).
\end{equation*}
Taking the absolute value, the expectation and using independence increments property, for every $k=0,1,2,\ldots, N-1$, we obtain for $t\in(t_k,t_{k+1}]$,
\begin{align*}
\EE&\left[\left|\tilde{x}_{\Delta}(t)-{X}_{\Delta}(\eta(t))\right|^q\right] \\
&= \EE\left[\left|\int_{t_k}^{t}(\mu-\lambda\widetilde{X}_{\Delta}(\eta(s)))ds+\int_{t_k}^{t}\kappa\widetilde{X}_{\Delta}(\eta(s-))dL^{\alpha}(s)\right|^q\right]\\
&\leq 2^{q-1}\Delta^{q-1}\EE\left[\int_{t_k}^{t}\left|\mu-\lambda\widetilde{X}_{\Delta}(\eta(s))\right|^q ds\right]+2^{q-1}\EE\left[\left|\int_{t_k}^{t}\kappa\widetilde{X}_{\Delta}(\eta(s-))d L^{\alpha}(s)\right|^q\right]\\
&\leq 4^{q-1}\Delta^{q}\mu^q+4^{q-1}\Delta^{q-1}\lambda^q\EE\left[\int_{t_k}^{t}\left|\widetilde{X}_{\Delta}(\eta(s))\right|^qds\right]+2^{q-1}\kappa^q\Delta^{\frac{q}{\alpha}}\EE\left[\left|\widetilde{X}_{\Delta}(t_k)\right|^q\right]\\
&=4^{q-1}\Delta^{q}\mu^q+\left(4^{q-1}\Delta^{q}\lambda^q+2^{q-1}\kappa^q\Delta^{\frac{q}{\alpha}}\right)\EE\left[\left|\widetilde{X}_{\Delta}(t_k)\right|^q\right]\\
&\leq 4^{q-1}\Delta^q(\mu^q+\Delta^q\lambda^q)+2^{q-1}\kappa^q\Delta^{q+\frac{q}{\alpha}}+\left(4^{q-1}\Delta^{q}\lambda^q+2^{q-1}\kappa^q\Delta^{\frac{q}{\alpha}}\right)\EE\left[\left|{X}_{\Delta}(t_k)\right|^q\right]\\
&\leq 4^{q-1}\Delta^q(\mu^q+\Delta^q\lambda^q)+2^{q-1}\kappa^q\Delta^{q+\frac{q}{\alpha}}+C\left(4^{q-1}\Delta^{q}\lambda^q+2^{q-1}\kappa^q\Delta^{\frac{q}{\alpha}}\right)\\
&\leq C\Delta^{q/\alpha}, 
\end{align*}
where we have applied Lemma \ref{lemma:2} in the penultimate line. \eproof
\end{proof}	

\begin{lemma}\label{xxerror}
Suppose that Assumptions \ref{assp:coef} and \ref{assp:levy} hold. For any $\Delta\in\left(0,\frac{\mu-1}{\lambda}\wedge\frac{1}{\lambda}\right)$ and $\alpha\in(1,2)$, we have 
\begin{equation*}
\PP\left(X_{\Delta}(t_{k+1})<\Delta\right)=0, ~~\text{as}~ \Delta\to 0,
\end{equation*}
where $k=0,1,2,\ldots$ is the number of steps. 
\end{lemma}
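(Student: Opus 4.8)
\emph{Proof plan.} I read the assertion as ``$\PP(X_\Delta(t_{k+1})<\Delta)\to 0$ as $\Delta\to0$'', uniformly over $k\le N-1$. The mechanism is simple: since $\widetilde X_\Delta(t_k)=X_\Delta(t_k)\vee\Delta\ge\Delta>0$ always, the raw update in \eqref{Num} can only drop below $\Delta$ when the stable increment $\Delta L^\alpha_k$ is atypically large and negative, and that event has merely polynomially small probability; summing its effect over the $N=\lfloor T/\Delta\rfloor$ steps still yields a bound that vanishes.

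First I would isolate a pointwise inclusion. On $B_k:=\{X_\Delta(t_k)\ge\Delta\}$ one has $\widetilde X_\Delta(t_k)=X_\Delta(t_k)$, so \eqref{Num} reduces to the plain Euler step $X_\Delta(t_{k+1})=X_\Delta(t_k)\bigl(1-\lambda\Delta+\kappa\Delta L^\alpha_k\bigr)+\mu\Delta$. Since $\Delta<1/\lambda$ forces $1-\lambda\Delta>0$, on $B_k\cap\{1-\lambda\Delta+\kappa\Delta L^\alpha_k\ge0\}$ the first summand is nonnegative, hence $X_\Delta(t_{k+1})\ge\mu\Delta>\Delta$ because $\mu>1$. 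Therefore $\{X_\Delta(t_{k+1})<\Delta\}\cap B_k\subseteq\bigl\{\Delta L^\alpha_k<-(1-\lambda\Delta)/(\kappa\Delta)\bigr\}$.

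Next I would turn this into a recursion. Splitting on $B_k$ and its complement gives $\PP(X_\Delta(t_{k+1})<\Delta)\le\PP(X_\Delta(t_k)<\Delta)+\PP\bigl(\Delta L^\alpha_k<-(1-\lambda\Delta)/(\kappa\Delta)\bigr)$. Iterating down to $k=0$, where $X_\Delta(t_0)=x_0>0$ makes $\PP(X_\Delta(t_0)<\Delta)=0$ as soon as $\Delta\le x_0$, and using $\Delta L^\alpha_k\overset{d}{=}L^\alpha(\Delta)$ by stationarity of increments, I obtain $\PP(X_\Delta(t_{k+1})<\Delta)\le (k+1)\,\PP\bigl(|L^\alpha(\Delta)|>1/(2\kappa\Delta)\bigr)$, valid once $\Delta$ is small enough that $1-\lambda\Delta>1/2$.

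It remains to estimate the single-step probability, which I would do with Lemma \ref{L_alpha_moment} (taking $q=1$) and Markov's inequality: $\PP\bigl(|L^\alpha(\Delta)|>1/(2\kappa\Delta)\bigr)\le 2\kappa\Delta\,\EE|L^\alpha(\Delta)|\le 2\kappa\bar C\,\Delta^{1+1/\alpha}$. Since $k+1\le N\le T/\Delta$, this yields $\PP(X_\Delta(t_{k+1})<\Delta)\le 2\kappa\bar C T\,\Delta^{1/\alpha}\to0$, uniformly in $k$; one could alternatively invoke the heavy-tail estimate $\PP(L^\alpha(\Delta)<-x)\le C\Delta x^{-\alpha}$ in place of the moment lemma. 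The only delicate point is the bookkeeping in the recursion: one must make sure that the extra event discarded at each of the $\sim T/\Delta$ steps is precisely a large-negative-increment event of probability $O(\Delta^{1+1/\alpha})$, so that the accumulated sum is $O(\Delta^{1/\alpha})$ rather than $O(1)$. The hypothesis $\Delta<\frac{\mu-1}{\lambda}\wedge\frac1\lambda$ is exactly what keeps $1-\lambda\Delta>0$ (and $\mu-1-\lambda\Delta>0$), i.e.\ what prevents the Euler step from falling below $\Delta$ absent such an increment.
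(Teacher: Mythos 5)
There is a genuine gap, and it sits in the algebra that drives your whole quantitative bookkeeping. From the update $X_{\Delta}(t_{k+1})=X_{\Delta}(t_k)\bigl(1-\lambda\Delta+\kappa\,\Delta L^{\alpha}_k\bigr)+\mu\Delta$ on $B_k=\{X_{\Delta}(t_k)\ge\Delta\}$, the event you must exclude is $\{1-\lambda\Delta+\kappa\,\Delta L^{\alpha}_k<0\}=\bigl\{\Delta L^{\alpha}_k<-(1-\lambda\Delta)/\kappa\bigr\}$: the threshold is a \emph{constant} of order $-1/\kappa$, not $-(1-\lambda\Delta)/(\kappa\Delta)$ as you wrote (the extra $1/\Delta$ has no source; $\Delta L^{\alpha}_k$ already denotes the increment $L^{\alpha}(t_{k+1})-L^{\alpha}(t_k)$, exactly as you later use it). This matters decisively for your union bound. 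With the correct threshold, Markov's inequality with Lemma \ref{L_alpha_moment} at $q=1$ gives a one-step probability of order $\Delta^{1/\alpha}$, and even the sharp stable tail estimate only gives order $\Delta$; multiplying by $N\approx T/\Delta$ steps yields $O(\Delta^{1/\alpha-1})\to\infty$ or $O(1)$ respectively, so your claimed uniform bound $2\kappa\bar C T\,\Delta^{1/\alpha}$ does not follow. Getting a summable $o(\Delta)$ one-step bound would require exploiting Assumption \ref{assp:levy} (the truncation of negative jumps below $-1/\kappa$) in a genuinely finer way, which you do not do.

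What does survive is the fixed-$k$ statement: with the corrected threshold, $\PP\bigl(\Delta L^{\alpha}_k<-(1-\lambda\Delta)/\kappa\bigr)\le 2\kappa\bar C\,\Delta^{1/\alpha}\to 0$ for each single step, and this is essentially the content of the paper's own proof, which works per step (splitting on $X_{\Delta}(t_k)<\Delta$ versus $X_{\Delta}(t_k)>\Delta$ and letting the tail probability of the increment below a level near $-1/\kappa$ vanish as $\Delta\to0$) and never attempts uniformity in $k$ or a sum over the $\lfloor T/\Delta\rfloor$ steps. So your mechanism is the right one and coincides with the paper's, but the strengthened, uniform-in-$k$ conclusion you aim for is not established by your argument; either repair the threshold and retreat to the per-step claim, or supply a genuinely sharper one-step estimate using the jump truncation before summing over steps.
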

\begin{proof}
Recall \eqref{Num}, if $X_{\Delta}(t_k)<\Delta$, combine Assumption \ref{assp:levy}, we can arrive at
\begin{align*}
X_{\Delta}(t_{k+1})&=X_{\Delta}(t_k)+\left(\mu-\lambda\widetilde{X}_{\Delta}(t_k)\right)\Delta+\kappa\widetilde{X}_{\Delta}(t_k)\Delta L^{\alpha}_k\\
&=X_{\Delta}(t_k)+(\mu-\lambda\Delta)\Delta+\kappa\Delta \Delta L^{\alpha}_k\\
&=X_{\Delta}(t_k)+\mu\Delta-\lambda\Delta^2+\kappa\Delta L^{\alpha}_k\Delta\\
&>X_{\Delta}(t_k)+(\mu-1)\Delta-\lambda\Delta^2.
\end{align*}
By Assumption \ref{assp:coef} and $\Delta\in\left(0,\frac{\mu-1}{\lambda}\wedge\frac{1}{\lambda}\right)$, it can be inferred that
\begin{equation*}
(\mu-1)\Delta-\lambda\Delta^2>0. 
\end{equation*}
This indicates that if there exists a integer $k$ such that $X_{\Delta}(t_k)<\Delta$, then $X_{\Delta}(t_{k+1})>X_{\Delta}(t_k)$ holds.  Below we prove the case where $X_{\Delta}(t_k)>\Delta$, which requires proof
\begin{align*}
\PP&\left(X_{\Delta}(t_k)+\left(\mu-\lambda X_{\Delta}(t_k)\right)\Delta+\kappa X_{\Delta}(t_k)\Delta L_k^{\alpha}<\Delta\right)\\
&=\PP\left(\kappa X_{\Delta}(t_k)\Delta L_k^{\alpha}<\Delta-X_{\Delta}(t_k)-(\mu-\lambda X_{\Delta}(t_k))\Delta\right)\\
&=\PP\left(\Delta L_{k}^{\alpha}<\frac{(1-\mu)\Delta-(1-\lambda\Delta)X_{\Delta}(t_k)}{\kappa X_{\Delta}(t_k)}\right).
\end{align*}
The random variable $\Delta L_k^{\alpha}$ represents the increment of an $\alpha$-stable process, and its tail behavior is controlled by a power-law decay. Specifically, as $ x \to -\infty $, the tail probability behaves like,
\begin{equation*}
P(\Delta L_k^{\alpha} < x) \sim |x|^{-\alpha}.
\end{equation*}
Then, we can get
\begin{equation*}
\PP\left(\Delta L_{k}^{\alpha}<\frac{(1-\mu)\Delta-(1-\lambda\Delta)X_{\Delta}(t_k)}{\kappa X_{\Delta}(t_k)}\right)\sim \left(\frac{|(1-\mu)\Delta-(1-\lambda\Delta)X_{\Delta}(t_k)|}{\kappa X_{\Delta}(t_k)}\right)^{-\alpha}. 
\end{equation*}
It is obvious that when $\Delta\to 0$, we have
\begin{equation*}
\left(\frac{|(1-\mu)\Delta-(1-\lambda\Delta)X_{\Delta}(t_k)|}{\kappa X_{\Delta}(t_k)}\right)^{-\alpha}\to \left(\frac{1}{\kappa}\right)^{-\alpha}. 
\end{equation*}
Recall $\kappa\in(0,1)$ and Assumption \ref{assp:levy}, we can derive 
\begin{equation*}
\PP\left(\Delta L_{k}^{\alpha}<\frac{(1-\mu)\Delta-(1-\lambda\Delta)X_{\Delta}(t_k)}{\kappa X_{\Delta}(t_k)}\right)\rightarrow\PP\left(\Delta L_k^{\alpha}<-\frac{1}{\kappa}\right)\rightarrow0, ~~\text{as}~\Delta\to 0.
\end{equation*}
This completes the proof.	\eproof
\end{proof}

\begin{theorem}\label{thm:conver}
For any $\Delta\in\left(0,\frac{\mu-1}{\lambda}\wedge\frac{1}{\lambda}\right)$ and $q\in[1,\alpha)$, we have
\begin{equation}\label{Thmequ}
\sup_{0\leq t\leq T}\EE\left[\left|x(t)-\tilde{x}_{\Delta}(t)\right|^q\right]\leq C\Delta^{q/\alpha}.
\end{equation}
\end{theorem}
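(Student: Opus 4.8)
The natural object to study is the error process $e_\Delta(t):=x(t)-\tilde{x}_\Delta(t)$. Subtracting the integral form of \eqref{continuousPPEM} from that of \eqref{mainSDE}, I would first record that $e_\Delta(0)=0$ and
\begin{equation*}
e_\Delta(t)=-\lambda\int_0^t\big(x(s)-\widetilde{X}_\Delta(\eta(s))\big)\,ds+\kappa\int_0^t\big(x(s-)-\widetilde{X}_\Delta(\eta(s-))\big)\,dL^\alpha(s).
\end{equation*}
Inside the integrands I would split the discretisation error as
\begin{equation*}
x(s)-\widetilde{X}_\Delta(\eta(s))=e_\Delta(s)+\big(\tilde{x}_\Delta(s)-X_\Delta(\eta(s))\big)+\big(X_\Delta(\eta(s))-\widetilde{X}_\Delta(\eta(s))\big),
\end{equation*}
so that the first summand feeds a Gronwall-type recursion in $e_\Delta$, the second is $O(\Delta^{q/\alpha})$ in $L^q$ by Lemma \ref{lemma:3}, and the third — which is nonzero only on $\{X_\Delta(\eta(s))<\Delta\}$ — is rendered negligible by Lemma \ref{xxerror}, since for $\Delta$ in the admissible range the $\vee\,\Delta$ truncation is (asymptotically) inactive, so that on that event one uses H\"older together with the uniform $L^{q'}$-bounds ($q<q'<\alpha$) of Lemma \ref{lemma:2}.

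Because $L^\alpha$ possesses moments only of order strictly below $\alpha$, the integrands above have infinite $\alpha$-th moments, so Lemma \ref{L_alpha_moment_2} cannot be invoked directly. The plan is therefore to localise: introduce $\theta_R:=\inf\{t\ge0:|x(t)|\vee|\tilde{x}_\Delta(t)|\ge R\}$, which increases to $\infty$ a.s. by Theorem \ref{thm:possol} together with Lemmas \ref{lemma:1} and \ref{lemma:2}, and work on $[0,t\wedge\theta_R]$, where all the integrands are bounded by a constant multiple of $R$ and hence are $\alpha$-integrable. On this interval I would apply $|a+b+c|^q\le 3^{q-1}(|a|^q+|b|^q+|c|^q)$, H\"older's inequality to the Lebesgue integral, and Lemma \ref{L_alpha_moment_2} to the stochastic integral, to obtain after taking expectations an inequality of the shape
\begin{equation*}
\sup_{0\le r\le t}\EE\big[|e_\Delta(r\wedge\theta_R)|^q\big]\le C\Delta^{q/\alpha}+C\Big(\int_0^t\sup_{0\le u\le s}\EE\big[|e_\Delta(u\wedge\theta_R)|^\alpha\big]\,ds\Big)^{q/\alpha},
\end{equation*}
which I would close exactly as in the proof of Lemma \ref{lemma:2}: the uniform moment bounds of Theorem \ref{thm:possol}/Lemma \ref{lemma:1} and Lemma \ref{lemma:2} let one bridge the $q$- and $\alpha$-exponents via Jensen/H\"older, the nonlinear Gronwall inequality then gives the estimate for the localised process with a bound independent of $R$, and finally $R\to\infty$ via Fatou's lemma (since $e_\Delta(t\wedge\theta_R)\to e_\Delta(t)$ a.s.) removes the localisation. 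The local one-step errors entering along the way — $x(s)-x(\eta(s))$ for the true solution and $\tilde{x}_\Delta(s)-X_\Delta(\eta(s))$ for the scheme — are each seen to be $O(\Delta^{1/\alpha})$ in $L^q$ using Lemma \ref{L_alpha_moment} and the independence of $\Delta L^\alpha_k$ from $\mathcal{F}_{t_k}$, just as in Lemma \ref{lemma:3}.

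The crux, and the step I expect to cause the most trouble, is precisely the mismatch between the target norm ($L^q$ with $q<\alpha$) and the $L^\alpha$-norm of the integrand that Lemma \ref{L_alpha_moment_2} forces onto its right-hand side — a genuine obstruction, since the error process itself has infinite $\alpha$-th moment. Localising by $\theta_R$ restores integrability, but the whole argument must then be arranged so that neither the final estimate nor, more importantly, the Gronwall constant deteriorates as $R\to\infty$; keeping all $R$-dependence confined to terms that vanish in the limit is the delicate part, and it is there that the argument leans on the previously established moment bounds and on the near-inactivity of the positivity truncation (Lemma \ref{xxerror}). An alternative route — applying It\^o's formula (Lemma \ref{Ito}) to a smoothed version of $|e_\Delta|^q$ — runs into the non-integrability of $\int_{0<|z|\le1}|z|^q\,\nu(dz)$ for $q<\alpha$ together with the singularity of the second derivative at the origin, so I would not pursue it.
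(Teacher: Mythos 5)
Your proposal follows essentially the same route as the paper's own proof: the identical error equation and three-term splitting of $x(s)-\widetilde{X}_{\Delta}(\eta(s))$, with the middle term controlled by Lemma \ref{lemma:3}, the truncation term made negligible via Lemma \ref{xxerror} (the paper does this through a small-probability event $\Omega_1$ with parameters $\delta,\varepsilon$ plus the moment bound of Lemma \ref{lemma:2}), localization by a stopping time (the paper's $\zeta_R=\tau_R\wedge\rho_R$), Lemma \ref{L_alpha_moment_2} for the stochastic integral, the nonlinear Gronwall inequality of Willett--Wong, and Fatou as $R\to\infty$. The $q$-versus-$\alpha$ exponent mismatch you single out as the crux is handled in the paper in exactly the same way as in its Lemma \ref{lemma:2} (the $\alpha$-moment of the localized error sits inside the Gronwall integral while the $q$-moment sits outside), so your sketch is faithful to the paper's argument, including at its most delicate point.
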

\begin{proof}
By \eqref{mainSDE} gives
\begin{equation*}
x(t)=x_0+\int_{0}^{t}\left(\mu-\lambda x(s)\right)ds+\int_{0}^{t}\kappa x(s-)dL^{\alpha}(s),
\end{equation*}
combining \eqref{continuousPPEM} we have
\begin{equation*}
x(t)-\tilde{x}_{\Delta}(t)=\int_{0}^{t}-\lambda\left(x(s)-\widetilde{X}_{\Delta}(\eta(s))\right)ds+\int_{0}^{t}\kappa\left(x(s-)-\widetilde{X}_{\Delta}(\eta(s-))\right)dL^{\alpha}(s).
\end{equation*}
Recall $\tau_R$ defined in \eqref{tauR} and $\rho_R$ defined in \eqref{rhoR}. Define $\zeta_R=\tau_R\wedge\rho_R$ and $e(t)=x(t)-\tilde{x}_{\Delta}(t)$, we get
\begin{align*}
e(t\wedge\zeta_R)&=\int_{0}^{t\wedge\zeta_R}-\lambda\left(x(s)-\widetilde{X}_{\Delta}(\eta(s))\right)ds\\
&\quad +\int_{0}^{t\wedge\zeta_R}\kappa\left(x(s-)-\widetilde{X}_{\Delta}(\eta(s-))\right)dL^{\alpha}(s).
\end{align*}

Then we can obtain
\begin{align}\label{xXerror}
\EE\left[\left|e(t\wedge\zeta_R)\right|^q\right] &=\EE \bigg[\bigg|\int_{0}^{t\wedge\zeta_R}-\lambda\left(x(s)-\widetilde{X}_{\Delta}(\eta(s))\right)ds\nonumber\\
&\quad +\int_{0}^{t\wedge\zeta_R}\kappa \left(x(s-)-\widetilde{X}_{\Delta}(\eta(s-))\right)dL^{\alpha}(s)\bigg|^q \bigg]\nonumber \\
&\leq 2^{q-1}\EE\left[ \left|\int_{0}^{t\wedge\zeta_R}-\lambda\left(x(s)-\widetilde{X}_{\Delta}(\eta(s))\right)ds\right|^q \right] \nonumber \\
&\quad +2^{q-1}\EE\left[\left|\int_{0}^{t\wedge\zeta_R}\kappa \left(x(s-)-\widetilde{X}_{\Delta}(\eta(s-))\right)dL^{\alpha}(s)\right|^q\right] \nonumber \\
&=\Theta_1+\Theta_2.
\end{align}
By the H\"older inequality, we obtain
\begin{align*}
\Theta_1&=2^{q-1}\EE\left[\left|\int_{0}^{t\wedge\zeta_R}-\lambda\left(x(s)-\widetilde{X}_{\Delta}(\eta(s))\right)ds\right|^q\right]  \nonumber\\
&\leq 2^{q-1}\lambda^qT^{q-1}\int_{0}^{t\wedge\zeta_R}\EE\left[\left|x(s)-\widetilde{X}_{\Delta}(\eta(s))\right|^q\right]ds. 
\end{align*}
Applying the triangle inequality gives
\begin{align*}
\EE\left[\left|x(s)-\widetilde{X}_{\Delta}(\eta(s))\right|^{q}\right]&\leq3^{q-1} \Big(\EE\left[\left|x(s)-\tilde{x}_{\Delta}(s)\right|^{q}\right]+\EE\left[\left|\tilde{x}_{\Delta}(s)-{X}_{\Delta}(\eta(s))\right|^{q}\right]\\
&\quad +\EE\left[\left|X_{\Delta}(\eta(s))-\widetilde{X}_{\Delta}(\eta(s))\right|^{q}\right]\Big).
\end{align*}
It follows from Lemma \ref{lemma:3} that 
\begin{equation*}
\EE\left[\left|\tilde{x}_{\Delta}(s)-{X}_{\Delta}(\eta(s))\right|^{q}\right]\leq C\Delta^{q/\alpha}.
\end{equation*}
From Lemma \ref{xxerror}, we can  find a samll enough $\delta=\delta(\varepsilon)\in(0,1)$ such that 
\begin{equation*}
\PP(\Omega_1)\geq 1-\varepsilon, ~~\forall~ \Delta\in\left(0,\frac{\mu-1}{\lambda}\wedge\frac{1}{\lambda}\right), 
\end{equation*}
where 
\begin{equation*}
\Omega_1=\{|X_{\Delta}(\eta(t))-\widetilde{X}_{\Delta}(\eta(t))|<{\delta}\}.
\end{equation*}
Hence, 
\begin{align*}
\EE\left[\left|X_{\Delta}(\eta(s))-\widetilde{X}_{\Delta}(\eta(s))\right|^{q}\right]&=\EE\left[\left|X_{\Delta}(\eta(s))-\widetilde{X}_{\Delta}(\eta(s))\right|^{q}\mathbb{I}_{\Omega_1}\right]\\
&\quad +\EE\left[\left|X_{\Delta}(\eta(s))-\widetilde{X}_{\Delta}(\eta(s))\right|^{q}\mathbb{I}_{\Omega_1^c}\right]\\
&\leq \delta^{q}+\PP(\Omega_1^c)\EE\left[\left|X_{\Delta}(\eta(s))-\widetilde{X}_{\Delta}(\eta(s))\right|^{q}\right]\\
&\leq \delta^{q}+2^{q-1}\varepsilon\left\{\EE\left[\left|X_{\Delta}(\eta(s))\right|^{q}\right]+\EE\left[\left|\widetilde{X}_{\Delta}(\eta(s))\right|^{q}\right]\right\}\\
&\leq \delta^{q}+2^{q-1}C\varepsilon, 
\end{align*}
where Lemma \ref{lemma:2} is used in the final step. Therefore, 
\begin{align}\label{Theta1}
\Theta_1&\leq 6^{q-1}\lambda^qT^{q}C\Delta^{q/\alpha}+6^{q-1}\lambda^qT^{q}(\delta^q+2^{q-1}C\epsilon) \nonumber\\
&\quad +6^{q-1}\lambda^qT^{q-1}\int_{0}^{t}\EE\left[|e(s\wedge\zeta_R)|^q\right]ds.
\end{align}

For the estimate of $\Theta_2$, using Lemma \ref{L_alpha_moment_2} yields
\begin{align}\label{Theta2}
\Theta_2&=2^{q-1}\EE\left[\left|\int_{0}^{t\wedge\zeta_R}\kappa \left(x(s-)-\widetilde{X}_{\Delta}(\eta(s-))\right)dL^{\alpha}(s)\right|^q \right]\nonumber\\
&\leq 2^{q-1}\kappa^qC_{q,\alpha}\left(\int_{0}^{t}\EE\left[\left|x(s\wedge\zeta_R)-\widetilde{X}_{\Delta}\left(\eta(s)\wedge\zeta_R\right)\right|^{\alpha} \right]ds\right)^{\frac{q}{\alpha}} \nonumber\\
&\leq \left(2^{2q-1}+2^{3q-1-\frac{q}{\alpha}}\right)\kappa^qC_{q,\alpha}3^{q-\frac{q}{\alpha}}C^{\frac{q}{\alpha}}T^{q/\alpha}+2^{q-1}\kappa^qC_{q,\alpha}6^{q-\frac{q}{\alpha}}T^{q/\alpha}\Delta^q \nonumber\\
&\quad+ 2^{q-1}\kappa^qC_{q,\alpha}3^{q-\frac{q}{\alpha}}\left(\int_{0}^{t}\EE\left[|e(s\wedge\zeta_R)|^{\alpha}\right]ds\right)^{\frac{q}{\alpha}}.
\end{align}
Substituting \eqref{Theta1} and \eqref{Theta2} into \eqref{xXerror}, and using the H\"older inequality, we obtain
\begin{align*}
\EE\left[\left|e(t\wedge\zeta_R)\right|^q\right]&\leq 6^{q-1}\lambda^qT^{q}C\Delta^{q/\alpha}+6^{q-1}\lambda^qT^{q}(\delta^q+2^{q-1}C\epsilon) \\
&\quad +6^{q-1}\lambda^qT^{q-1}\int_{0}^{t}\EE\left[|e(s\wedge\zeta_R)|^q\right]ds +2^{q-1}\kappa^qC_{q,\alpha}6^{q-\frac{q}{\alpha}}T^{q/\alpha}\Delta^q\\
&\quad + \left(2^{2q-1}+2^{3q-1-\frac{q}{\alpha}}\right)\kappa^qC_{q,\alpha}3^{q-\frac{q}{\alpha}}C^{\frac{q}{\alpha}}T^{q/\alpha} \\
&\quad+ 2^{q-1}\kappa^qC_{q,\alpha}3^{q-\frac{q}{\alpha}}\left(\int_{0}^{t}\EE\left[|e(s\wedge\zeta_R)|^{\alpha}\right]ds\right)^{\frac{q}{\alpha}}\\
&\leq 6^{q-1}\lambda^qT^{q}C\Delta^{q/\alpha}+6^{q-1}\lambda^qT^{q}(\delta^q+2^{q-1}C\epsilon)\\
&\quad +\left(2^{2q-1}+2^{3q-1-\frac{q}{\alpha}}\right)\kappa^qC_{q,\alpha}3^{q-\frac{q}{\alpha}}C^{\frac{q}{\alpha}}T^{q/\alpha}+2^{q-1}\kappa^qC_{q,\alpha}6^{q-\frac{q}{\alpha}}T^{q/\alpha}\Delta^q\\
&\quad +\left(6^{q-1}\lambda^qT^{q-\frac{q}{\alpha}}+2^{q-1}\kappa^qC_{q,\alpha}3^{q-\frac{q}{\alpha}}\right)\left(\int_{0}^{t}\EE\left[|e(s\wedge\zeta_R)|^{\alpha}\right]ds\right)^{\frac{q}{\alpha}}\\
&\leq A_1+B_1\left(\int_{0}^{t}\EE\left[|e(s\wedge\zeta_R)|^{\alpha}\right]ds\right)^{\frac{q}{\alpha}}, 
\end{align*}
where 
\begin{align*}
A_1&=6^{q-1}\lambda^qT^{q}C\Delta^{q/\alpha}+6^{q-1}\lambda^qT^{q}(\delta^q+2^{q-1}C\epsilon)\\
&\quad +\left(2^{2q-1}+2^{3q-1-\frac{q}{\alpha}}\right)\kappa^qC_{q,\alpha}3^{q-\frac{q}{\alpha}}C^{\frac{q}{\alpha}}T^{q/\alpha}+2^{q-1}\kappa^qC_{q,\alpha}6^{q-\frac{q}{\alpha}}T^{q/\alpha}\Delta^q
\end{align*}
and
\begin{equation*}
B_1=6^{q-1}\lambda^qT^{q-\frac{q}{\alpha}}+2^{q-1}\kappa^qC_{q,\alpha}3^{q-\frac{q}{\alpha}}.
\end{equation*}
This immediately gives
\begin{equation*}
\sup_{0\leq r \leq t}\EE\left[|e(r\wedge\zeta_R)|^q\right]\leq A_1+B_1\left(\int_{0}^{t}\sup_{0\leq r \leq s}\EE\left[|e(r\wedge\zeta_R)|^{\alpha}\right]ds\right)^{\frac{q}{\alpha}}.
\end{equation*}

Finally, by the non-linear form of the Gronwall inequality \cite{WillettWong1965}, it can be concluded that
\begin{equation*}
\sup_{0\leq r \leq t}\EE\left[|e(r\wedge\zeta_R)|^q\right]\leq A_1+B_1\frac{\left(\int_{0}^{T}e_1(s)A_1^{\alpha}ds\right)^{\frac{q}{\alpha}}}{1-[1-e_1(t)]^{\frac{q}{\alpha}}},
\end{equation*}
where 
\begin{equation*}
e_1(t)=\exp\left(-\int_{0}^{T}\left[6^{q-1}\lambda^qT^{q-\frac{q}{\alpha}}+2^{q-1}\kappa^qC_{q,\alpha}3^{q-\frac{q}{\alpha}}\right]^{\alpha} ds \right)=\exp\left(-\int_{0}^{T}B_1^{\alpha}ds\right).
\end{equation*}
Since $\delta$ is small enough and $\varepsilon\in(0,1)$ is arbitrary, letting $t=T$, $R\rightarrow\infty$ and applying Fatou's lemma, we can obtain the required inequality \eqref{Thmequ} and complete the proof. \eproof
\end{proof}

\begin{theorem}\label{thm:conver2}
For any $\Delta\in\left(0,\frac{\mu-1}{\lambda}\wedge\frac{1}{\lambda}\right)$ and $q\in[1,\alpha)$, we have
\begin{equation*}
\sup_{0\leq t\leq T}\EE\left[\left|x(t)-\widetilde{X}_{\Delta}(\eta(t))\right|^q\right]\leq C\Delta^{q/\alpha}.
\end{equation*}
\end{theorem}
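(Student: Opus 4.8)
The plan is to deduce this from Theorem \ref{thm:conver} and Lemma \ref{lemma:3} by inserting the two intermediate processes $\tilde x_\Delta(t)$ and $X_\Delta(\eta(t))$. First I would apply the elementary inequality $|a+b+c|^q\le 3^{q-1}(|a|^q+|b|^q+|c|^q)$ to write, for each fixed $t\in[0,T]$,
\begin{align*}
\EE\left[\left|x(t)-\widetilde X_\Delta(\eta(t))\right|^q\right]&\le 3^{q-1}\Big(\EE\left[|x(t)-\tilde x_\Delta(t)|^q\right]+\EE\left[|\tilde x_\Delta(t)-X_\Delta(\eta(t))|^q\right]\\
&\quad +\EE\left[|X_\Delta(\eta(t))-\widetilde X_\Delta(\eta(t))|^q\right]\Big).
\end{align*}
The supremum over $t\in[0,T]$ of the first expectation on the right is at most $C\Delta^{q/\alpha}$ by Theorem \ref{thm:conver}, and that of the second is at most $C\Delta^{q/\alpha}$ by Lemma \ref{lemma:3}. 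Everything then reduces to the third term, which measures the effect of the flooring map $\widetilde X_\Delta(\cdot)=X_\Delta(\cdot)\vee\Delta$; this is the only piece needing a genuinely new argument.

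For that term, writing $t_k=\eta(t)$ one has $X_\Delta(t_k)-\widetilde X_\Delta(t_k)=-\big(\Delta-X_\Delta(t_k)\big)^+$, so the correction is nonzero only on $\{X_\Delta(t_k)<\Delta\}$. The key point I would exploit is that the true solution is strictly positive by Theorem \ref{thm:possol}, so that $\big(\Delta-x(t_k)\big)^+\le\Delta$; combining this with the subadditivity $(a+b)^+\le a^++b^+$ gives
\begin{equation*}
\big(\Delta-X_\Delta(t_k)\big)^+\le\big(\Delta-x(t_k)\big)^++\big(x(t_k)-X_\Delta(t_k)\big)^+\le\Delta+\left|x(t_k)-X_\Delta(t_k)\right|.
\end{equation*}
Raising to the power $q$, taking expectations, and recalling $\tilde x_\Delta(t_k)=X_\Delta(t_k)$, this yields
\begin{equation*}
\EE\left[\left|X_\Delta(\eta(t))-\widetilde X_\Delta(\eta(t))\right|^q\right]\le 2^{q-1}\Delta^q+2^{q-1}\EE\left[\left|x(t_k)-\tilde x_\Delta(t_k)\right|^q\right]\le 2^{q-1}\Delta^q+C\Delta^{q/\alpha},
\end{equation*}
the last step being Theorem \ref{thm:conver} once more. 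Since $\Delta\in(0,1)$ and $\alpha>1$ we have $\Delta^q\le\Delta^{q/\alpha}$, so this term too is $O(\Delta^{q/\alpha})$, uniformly in $t\in[0,T]$.

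Assembling the three estimates and taking the supremum over $t\in[0,T]$ would give the asserted rate. I do not anticipate a serious obstacle: the result is essentially a corollary of the preceding theorem and lemma, and the single delicate choice is to control the flooring correction via the positivity of $x(t)$ (Theorem \ref{thm:possol}) rather than via the vanishing of $\PP(X_\Delta(t_k)<\Delta)$ from Lemma \ref{xxerror} — the latter route, after a H\"older split against the moment bound of Lemma \ref{lemma:2}, would only furnish the exponent $q/\alpha$ for $q$ in a proper subrange of $[1,\alpha)$.
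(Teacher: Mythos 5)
Your proposal is correct, and it follows the paper's overall skeleton: the same $3^{q-1}$ triangle-inequality split into $|x(t)-\tilde{x}_{\Delta}(t)|$, $|\tilde{x}_{\Delta}(t)-X_{\Delta}(\eta(t))|$ and $|X_{\Delta}(\eta(t))-\widetilde{X}_{\Delta}(\eta(t))|$, with the first two terms handled exactly as in the paper by Theorem \ref{thm:conver} and Lemma \ref{lemma:3}. Where you genuinely diverge is the flooring term: the paper disposes of it by citing Lemma \ref{xxerror}, i.e.\ the statement that $\PP(X_{\Delta}(t_k)<\Delta)$ vanishes as $\Delta\to 0$, whereas you bound it pathwise via $|X_{\Delta}(t_k)-\widetilde{X}_{\Delta}(t_k)|=(\Delta-X_{\Delta}(t_k))^{+}\le(\Delta-x(t_k))^{+}+|x(t_k)-X_{\Delta}(t_k)|\le\Delta+|x(t_k)-\tilde{x}_{\Delta}(t_k)|$, using the positivity of the exact solution (Theorem \ref{thm:possol}), the identity $\tilde{x}_{\Delta}(t_k)=X_{\Delta}(t_k)$, Theorem \ref{thm:conver} once more, and finally $\Delta^{q}\le\Delta^{q/\alpha}$ for $\Delta\in(0,1)$. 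Your route buys an explicit, quantitative $O(\Delta^{q/\alpha})$ bound on the flooring correction without appealing to Lemma \ref{xxerror}, whose conclusion is only an asymptotic limit and does not by itself furnish a convergence rate for this term (as you observe, turning it into one would require a H\"older split against the moment bound of Lemma \ref{lemma:2} together with a quantitative tail estimate, at the price of restricting $q$); the paper's version is shorter but leaves that quantification implicit, so your extra elementary positive-part estimate is a genuine tightening rather than a detour.
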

\begin{proof}
Due to
\begin{align*}
\left|x(t)-\widetilde{X}_{\Delta}(\eta(t))\right|^q&\leq 3^{q-1}\Big(|x(t)-\tilde{x}_{\Delta}(t)|^q+\left|\tilde{x}_{\Delta}(t)-X_{\Delta}(\eta(t))\right|^q\\
&\quad +\left|X_{\Delta}(\eta(t))-\widetilde{X}_{\Delta}(\eta(t))\right|^q\Big).
\end{align*}
Applying Lemmas \ref{lemma:3}, \ref{xxerror} and Theorem \ref{thm:conver},we obtain the assertion.    \eproof
\end{proof}

\section{Numerical simulations}\label{sec5}
\begin{expl}\label{Example:1}
We consider the following linear SDE
\begin{equation}\label{exsde:1}
dx(t) = (\mu - \lambda x(t))dt + \kappa x(t)dL^{\alpha}(t),  ~~x(0)=1.
\end{equation}
\end{expl}
Let $\mu = 1.5, \lambda = 2, \kappa = 0.5$. We can verify that they satisfy Assumption \ref{assp:coef}. Figure \ref{xt} shows the three paths generated by the positivity preserving EM numerical method.

In order to observe whether the parameters have an impact on the error, we acknowledge the following parameters in subsequent experiments \eqref{exsde:1}:
\begin{itemize}
\item $\mu=1.5$, $\lambda=2$, $\kappa=0.5$;
\item $\mu=2$, $\lambda=3$, $\kappa=0.5$;
\item $\mu=2$, $\lambda=3$, $\kappa=0.2$.
\end{itemize}

\begin{figure}[H]
\centering
\includegraphics[width=0.75\linewidth]{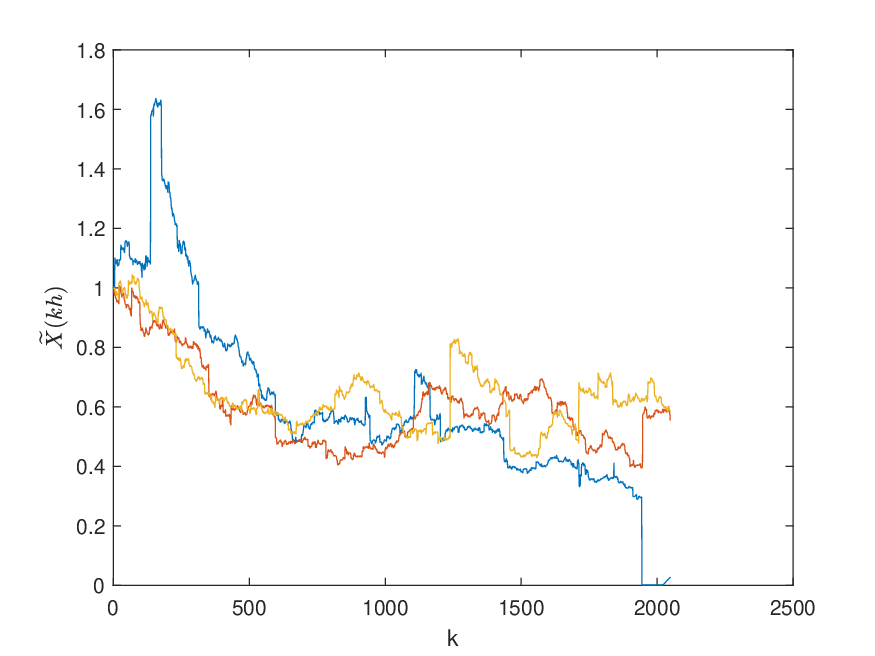}
\caption{Three paths generated by the positivity preserving EM numerical method}
\label{xt}
\end{figure}

We run $M=1000$ independent trajectories for every different stepsizes, $2^{-10}$, $2^{-11}$, $2^{-12}$, $2^{-13}$, $2^{-14}$, $2^{-16}$. The numerical solution with the stepsizes $2^{-16}$ is regarded as the exact solution. 
Tables \ref{table1}--\ref{table3} present the computational errors for different parameter choices. The numerical results indicate that as $\alpha$ decreases, the error reduces accordingly.

To clearly display the convergence rates, Figures \ref{ex1}--\ref{ex13} present the achieved errors versus stepsizes on a logarithmic scale. As predicted, the slopes of the errors (solid lines) and the reference dashed line match well, which indicates that the proposed scheme shows a strong convergence rate of order $1/\alpha$. Moreover, it is intuitively evident that variations in parameters do not influence the convergence rate.

\begin{table}[htph]
\centering
\renewcommand{\arraystretch}{1.2}
\setlength{\tabcolsep}{20pt}
\caption{Numerical results for \eqref{exsde:1} with $\mu=1.5$, $\lambda=2$, $\kappa=0.5$. }
\label{table1}
\begin{tabular}{ccccc}
\Xhline{2\arrayrulewidth}
$\Delta $ & $\alpha=1.8$   &  $\alpha=1.6$ &  $\alpha=1.4$  &  $\alpha=1.1$ \\ \Xhline{2\arrayrulewidth}
$2^{-14}$ &  $0.0007$  &  $0.0005$ & $0.0004$ & $0.0001$ \\
$2^{-13}$ &  $0.0011$   &  $0.0008$ & $0.0007$ & $0.0002$ \\
$2^{-12}$ &  $0.0016$   &  $0.0012$ & $0.0012$  & $0.0004$ \\
$2^{-11}$ &  $0.0023$   &  $0.0019$ & $0.0021$  & $0.0006$ \\
$2^{-10}$ &  $0.0039$  &  $0.0029$ & $0.0030$ & $0.0013$  \\ \Xhline{2\arrayrulewidth}
\end{tabular}
\end{table}

\begin{figure}[H]
\centering
\subfigure[{\label{fig:1}} $\alpha=1.8$]
{\includegraphics[width=0.49\linewidth]{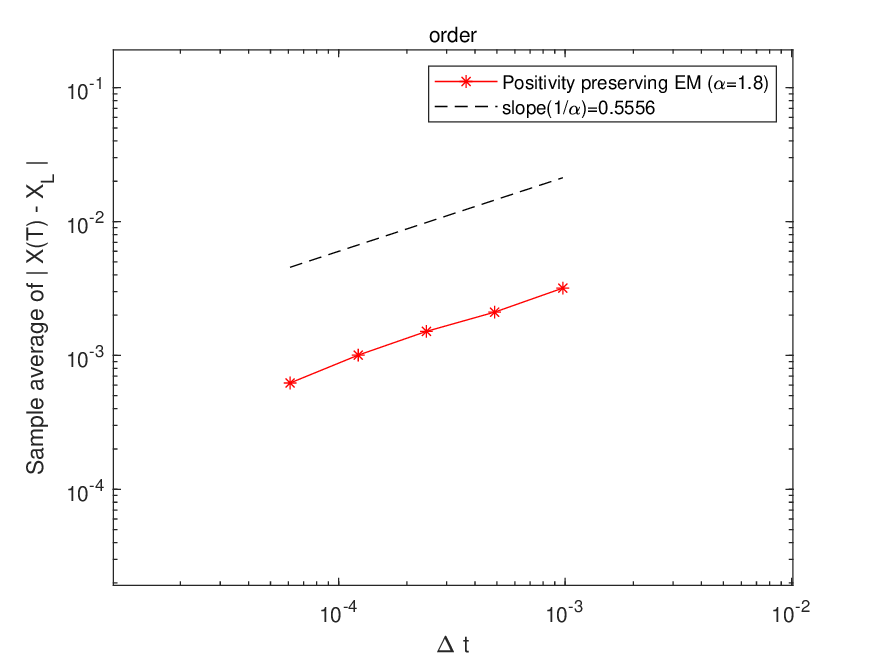}}
\subfigure[{\label{fig:2}} $\alpha=1.6$]
{\includegraphics[width=0.49\linewidth]{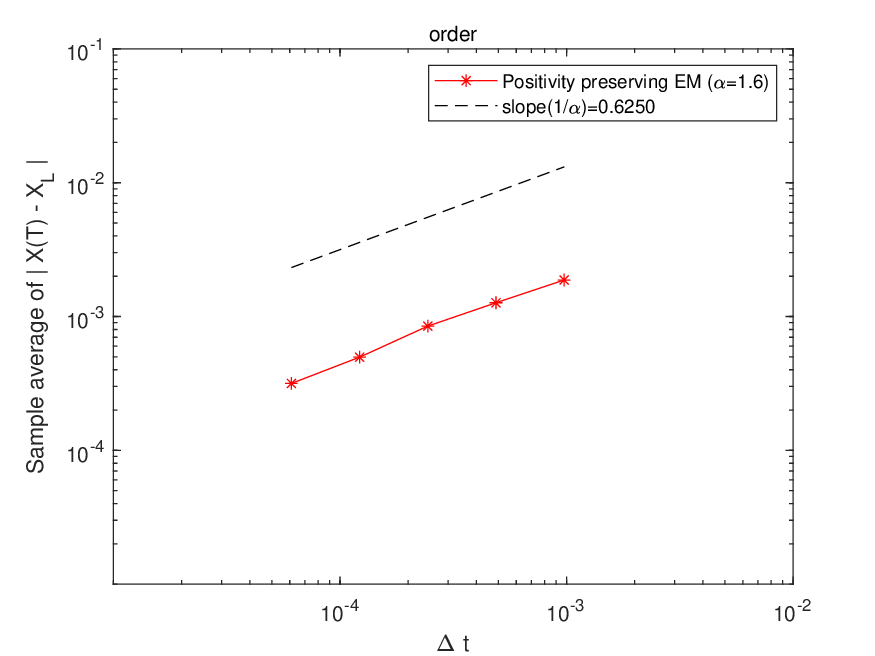}}
\subfigure[{\label{fig:3}} $\alpha=1.4$]
{\includegraphics[width=0.49\linewidth]{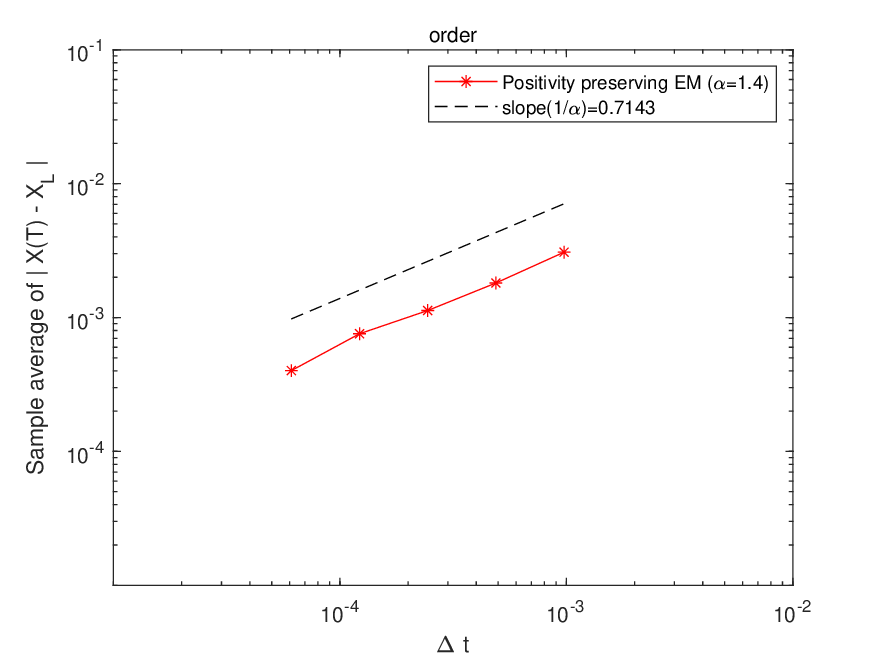}}
\subfigure[{\label{fig:4}} $\alpha=1.1$]
{\includegraphics[width=0.49\linewidth]{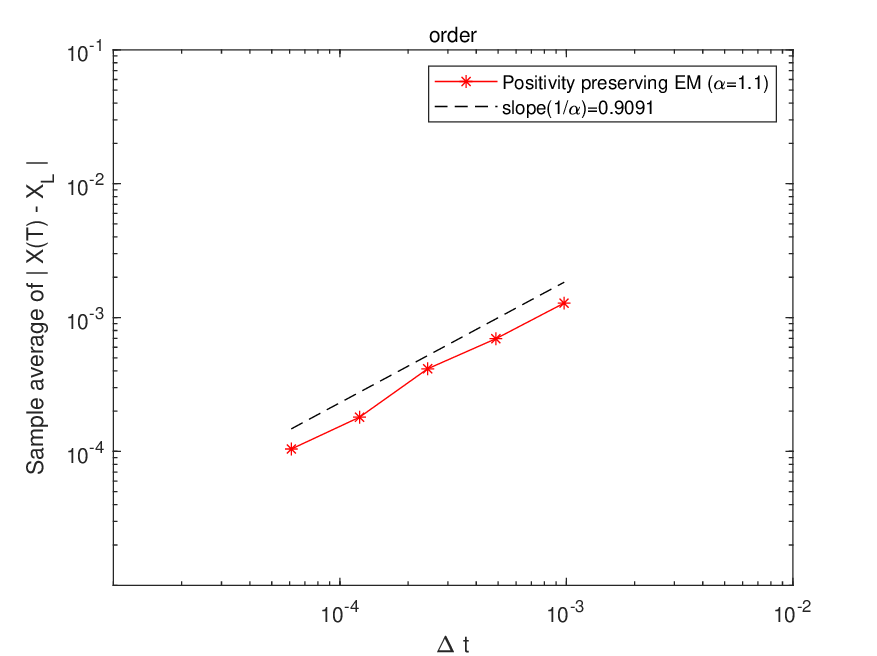}}
\caption{Errors versus stepsize $\Delta $ on log-log scale with $\mu=1.5$, $\lambda=2$, $\kappa=0.5$}
\label{ex1}
\end{figure}

\begin{table}[htph]
	\centering
	\renewcommand{\arraystretch}{1.2}
	\setlength{\tabcolsep}{20pt}
	\caption{Numerical results for \eqref{exsde:1} with $\mu=2$, $\lambda=3$, $\kappa=0.5$. }
	\label{table2}
	\begin{tabular}{ccccc}
		\Xhline{2\arrayrulewidth}
		$\Delta $ & $\alpha=1.8$   &  $\alpha=1.6$ &  $\alpha=1.4$  &  $\alpha=1.1$ \\ \Xhline{2\arrayrulewidth}
		$2^{-14}$ &  $0.0004$  &  $0.0003$ & $0.0002$ & $0.0551\times10^{-3}$ \\
		$2^{-13}$ &  $0.0006$  &  $0.0006$ & $0.0005$ & $0.1383\times10^{-3}$ \\
		$2^{-12}$ &  $0.0009$  &  $0.0009$ & $0.0007$ & $0.2191\times10^{-3}$ \\
		$2^{-11}$ &  $0.0012$   &  $0.0012$ &  $0.0011$  & $0.3997\times10^{-3}$ \\
		$2^{-10}$ &  $0.0019$  &  $0.0020$ &  $0.0019$ & $0.6640\times10^{-3}$  \\ \Xhline{2\arrayrulewidth}
	\end{tabular}
\end{table}

\begin{figure}[H]
	\centering
	\subfigure[{\label{fig:1_2}} $\alpha=1.8$]
	{\includegraphics[width=0.49\linewidth]{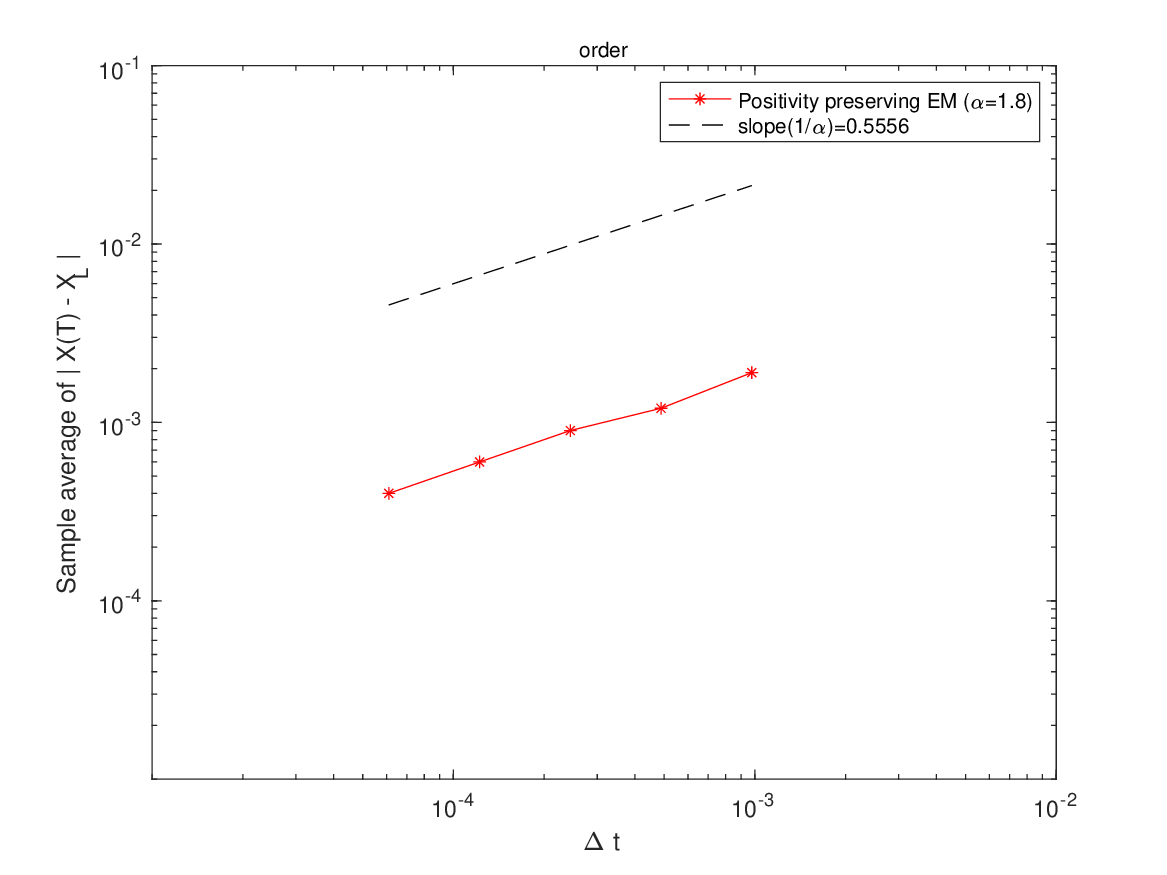}}
	\subfigure[{\label{fig:2_2}} $\alpha=1.6$]
	{\includegraphics[width=0.49\linewidth]{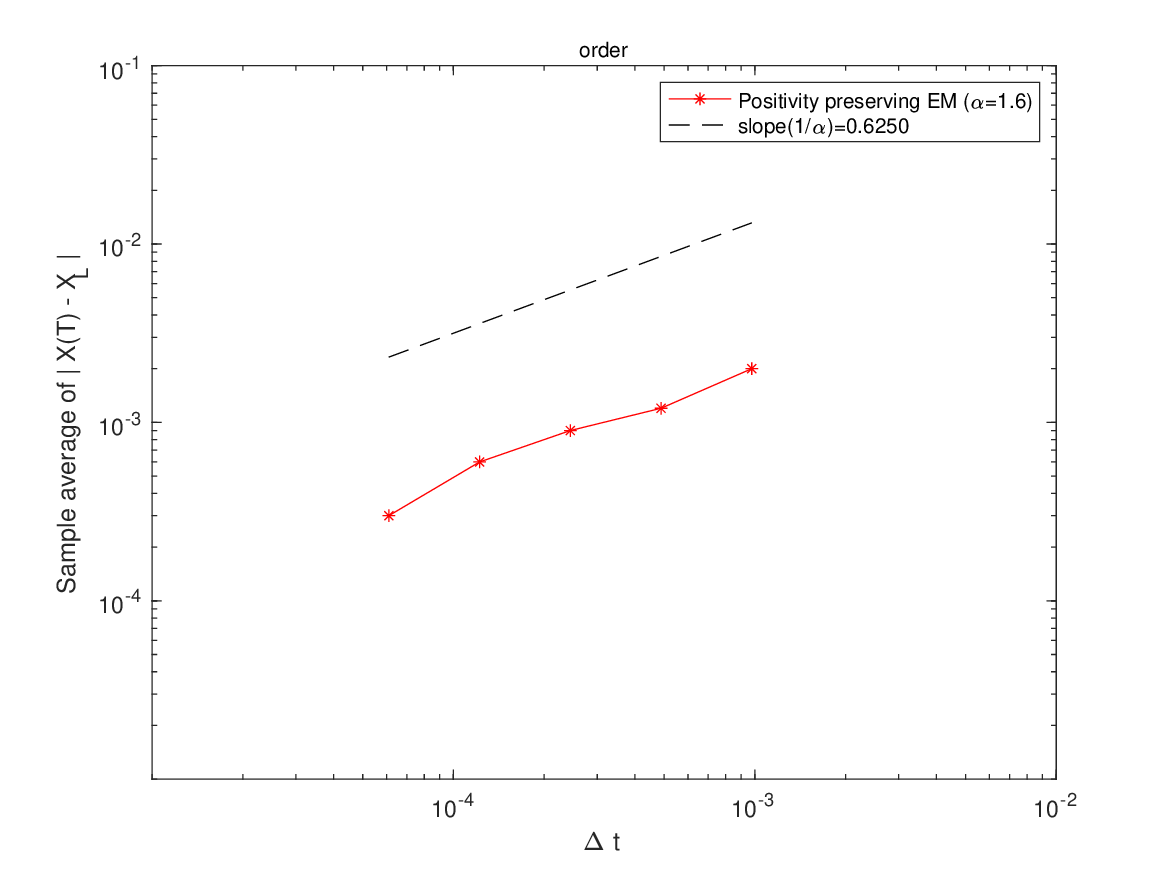}}
	\subfigure[{\label{fig:3_2}} $\alpha=1.4$]
	{\includegraphics[width=0.49\linewidth]{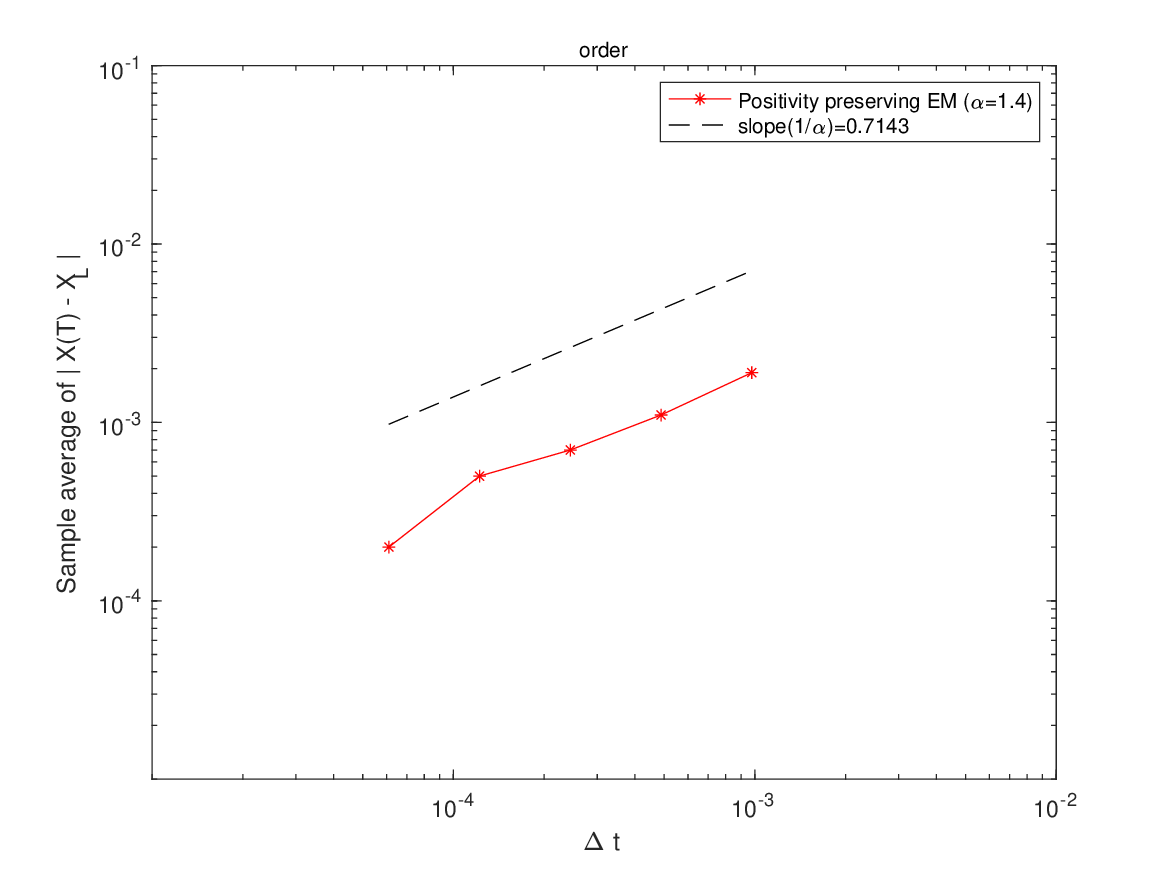}}
	\subfigure[{\label{fig:4_2}} $\alpha=1.1$]
	{\includegraphics[width=0.49\linewidth]{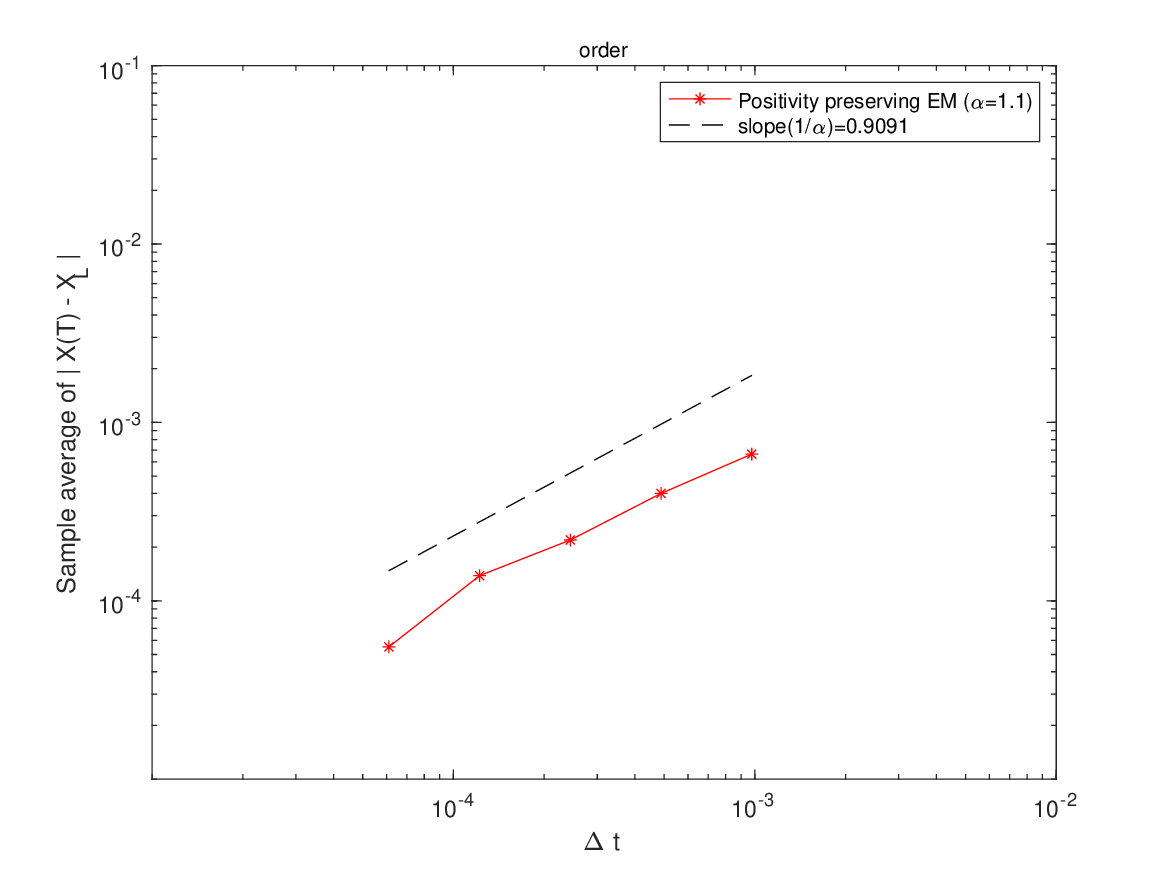}}
	\caption{Errors versus stepsize $\Delta $ on log-log scale with $\mu=2$, $\lambda=3$, $\kappa=0.5$}
	\label{ex12}
\end{figure}

\begin{table}[htph]
	\centering
	\renewcommand{\arraystretch}{1.2}
	\setlength{\tabcolsep}{11pt}
	\caption{Numerical results for \eqref{exsde:1} with $\mu=2$, $\lambda=3$, $\kappa=0.2$. }
	\label{table3}
	\begin{tabular}{ccccc}
		\Xhline{2\arrayrulewidth}
		$\Delta $ & $\alpha=1.8$   &  $\alpha=1.6$ &  $\alpha=1.4$  &  $\alpha=1.1$ \\ \Xhline{2\arrayrulewidth}
		$2^{-14}$ &  $0.0731\times10^{-3}$  &  $0.0583\times10^{-3}$ & $0.0261\times10^{-3}$ & $0.0133\times10^{-3}$ \\
		$2^{-13}$ &  $0.1012\times10^{-3}$  &  $0.0840\times10^{-3}$ & $0.0555\times10^{-3}$ & $0.0245\times10^{-3}$ \\
		$2^{-12}$ & $0.1841\times10^{-3}$  &  $0.1329\times10^{-3}$ & $0.0819\times10^{-3}$ & $0.0523\times10^{-3}$ \\
		$2^{-11}$ &  $0.2629\times10^{-3}$  &  $0.2296\times10^{-3}$ & $0.1516\times10^{-3}$ & $0.0816\times10^{-3}$ \\
		$2^{-10}$ &  $0.3504\times10^{-3}$  &  $0.3118\times10^{-3}$ & $0.2510\times10^{-3}$ & $0.1685\times10^{-3}$ \\ 
		\Xhline{2\arrayrulewidth}
	\end{tabular}
\end{table}

\begin{figure}[H]
	\centering
	\subfigure[{\label{fig:1_3}} $\alpha=1.8$]
	{\includegraphics[width=0.49\linewidth]{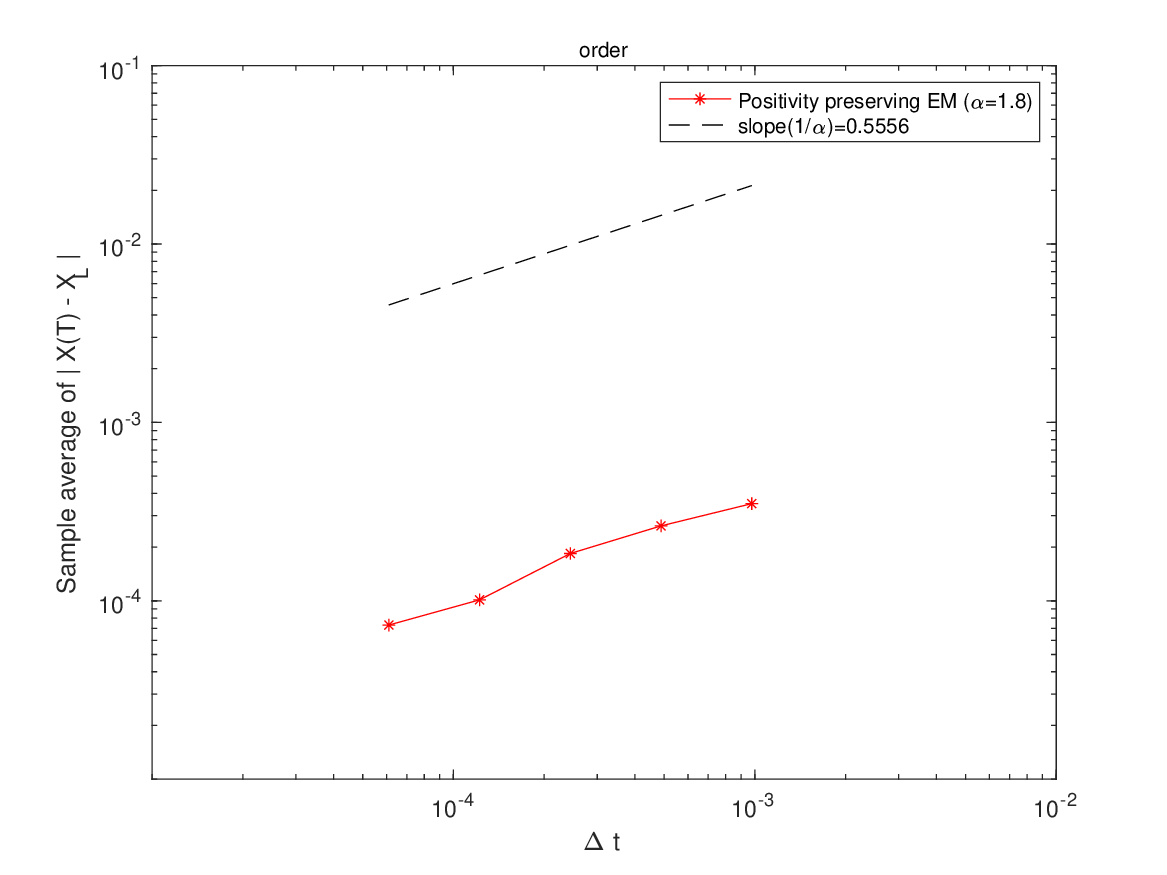}}
	\subfigure[{\label{fig:2_3}} $\alpha=1.6$]
	{\includegraphics[width=0.49\linewidth]{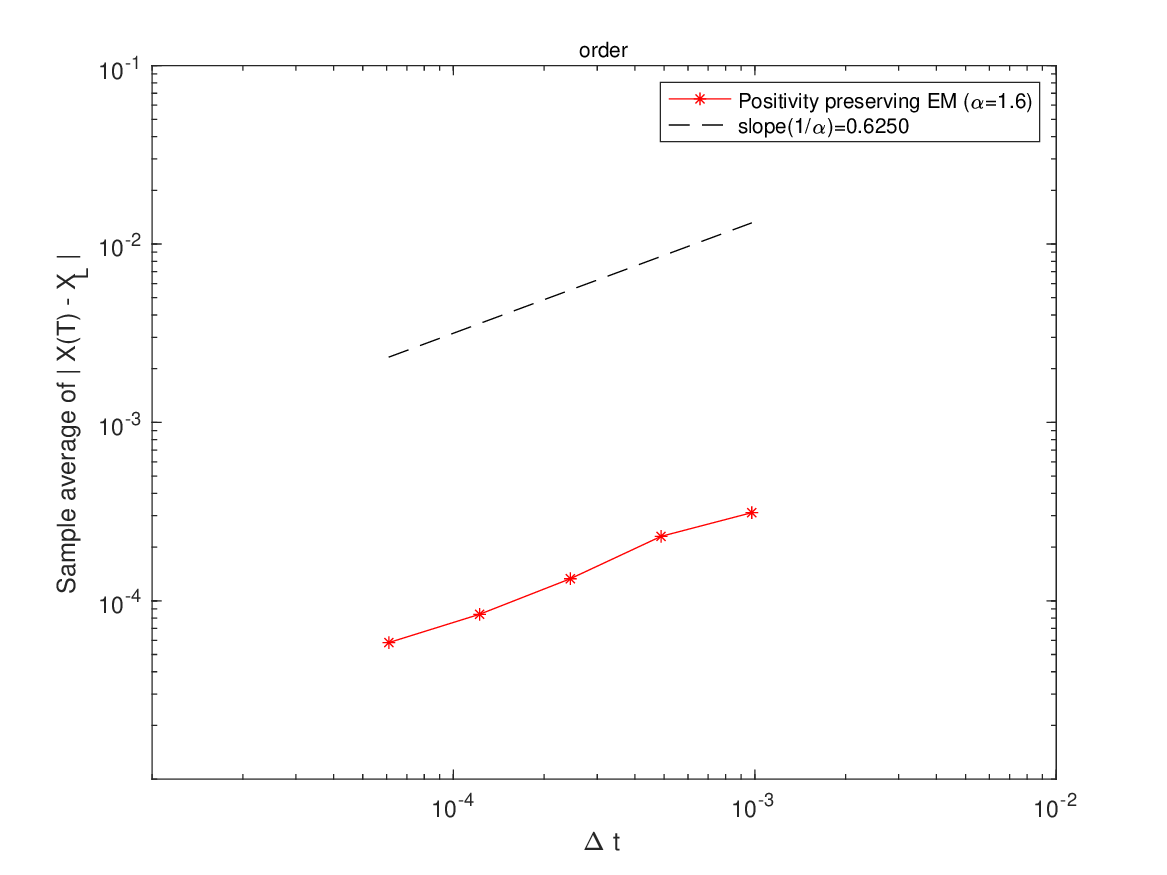}}
	\subfigure[{\label{fig:3_3}} $\alpha=1.4$]
	{\includegraphics[width=0.49\linewidth]{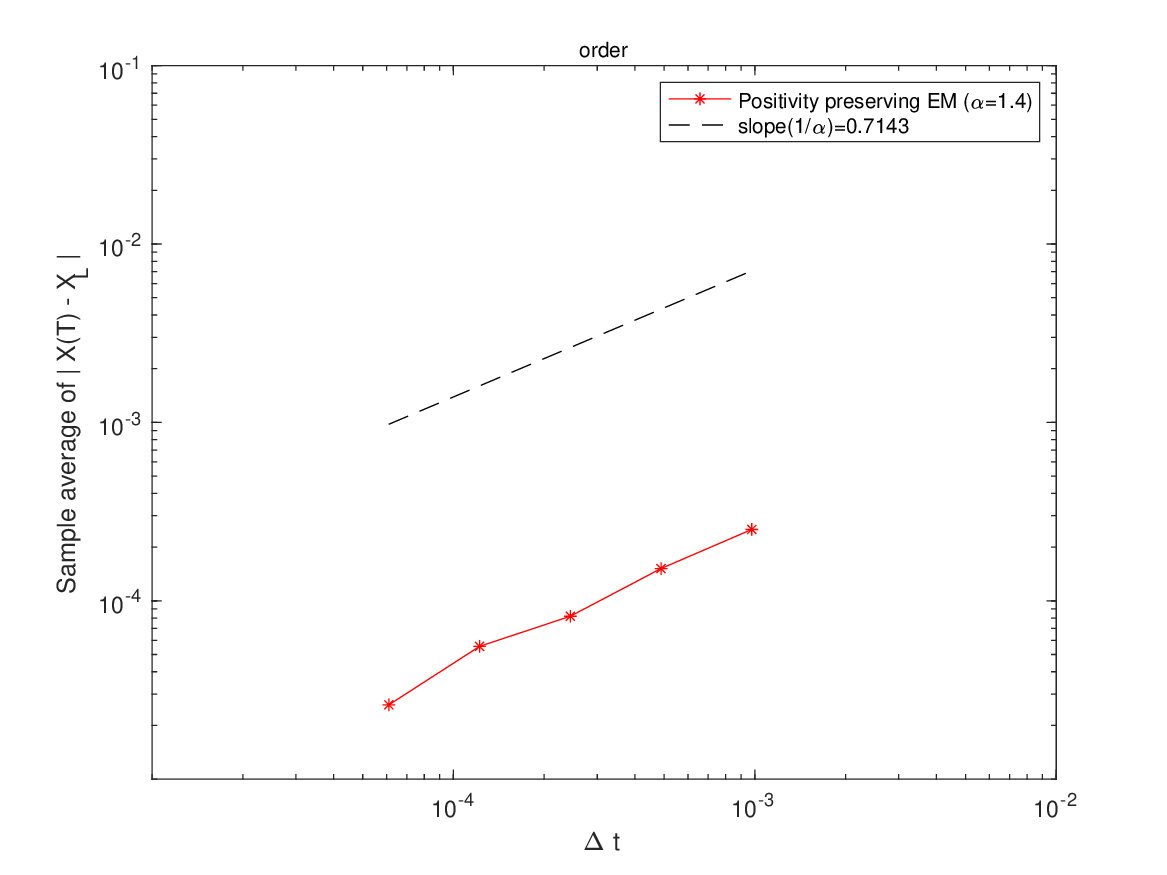}}
	\subfigure[{\label{fig:4_3}} $\alpha=1.1$]
	{\includegraphics[width=0.49\linewidth]{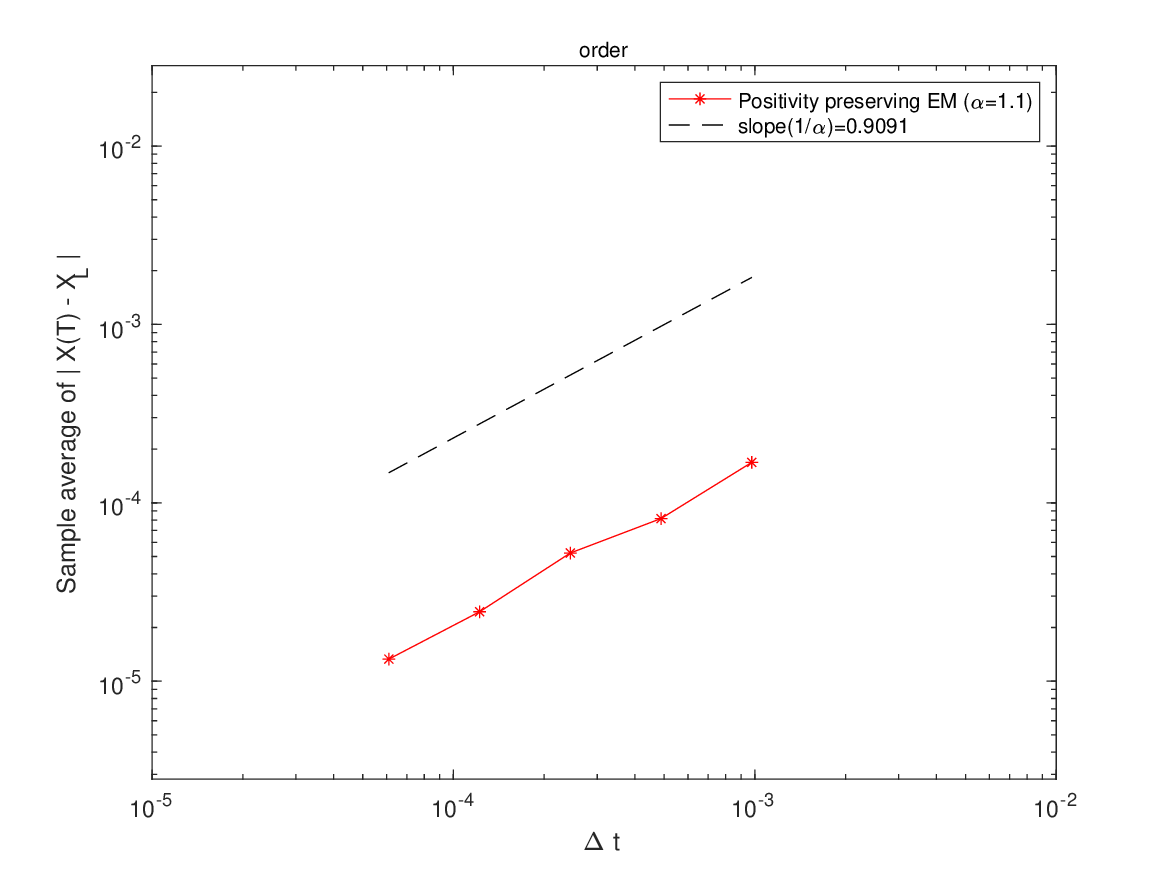}}
	\caption{Errors versus stepsize $\Delta $ on log-log scale with $\mu=2$, $\lambda=3$, $\kappa=0.2$}
	\label{ex13}
\end{figure}

\section{Conclusion and future work}\label{sec6}
In this paper, we investigate the numerical method for financial models driven by $\alpha$-stable processes. First, we prove that under certain conditions, the model admits a unique global positive solution. Next, we develop a positivity preserving EM method, prove its convergence, and show that the convergence rate is  $1/\alpha$.

There are several important problems that merit further exploration. One challenging aspect for theoretical analysis is the presence of super-linear terms in the coefficients of financial models. These terms, which grow faster than linearly, introduce significant complexity in both the analysis and the numerical methods. Another interesting area for future research is the long-term behavior of these models, as understanding how they evolve over time could provide valuable insights.

\end{document}